\documentclass[11pt,reqno]{amsart}
\usepackage[margin=1in]{geometry}
\usepackage{amsmath,amssymb,amsthm,mathtools}
\usepackage{microtype}
\usepackage{booktabs}
\usepackage[colorlinks=true,linkcolor=blue,citecolor=blue,urlcolor=blue]{hyperref}
\hypersetup{pdftitle={Counterexamples to the Polishchuk--Van den Bergh conjecture on curves},pdfauthor={Shengyong Pan}}

\newtheorem{theorem}{Theorem}[section]
\newtheorem{proposition}[theorem]{Proposition}
\newtheorem{corollary}[theorem]{Corollary}
\newtheorem{lemma}[theorem]{Lemma}
\newtheorem{conjecture}[theorem]{Conjecture}
\theoremstyle{definition}
\newtheorem{definition}[theorem]{Definition}
\theoremstyle{remark}
\newtheorem{remark}[theorem]{Remark}

\newcommand{\C}{\mathbb C}
\newcommand{\Z}{\mathbb Z}
\newcommand{\Db}{D^{\mathrm b}}
\newcommand{\Perf}{\operatorname{Perf}}
\newcommand{\Hom}{\operatorname{Hom}}
\newcommand{\Ext}{\operatorname{Ext}}
\newcommand{\RHom}{\operatorname{RHom}}
\newcommand{\Coh}{\operatorname{Coh}}
\newcommand{\oc}{\operatorname{oc}}
\newcommand{\pt}{\operatorname{Spec}\C}

\newcommand{\genus}{\operatorname{genus}}

\title[Counterexamples on curves]{Counterexamples to the Polishchuk--Van den Bergh conjecture on curves}
\author{Shengyong Pan}
\address{School of Mathematics and Statistics, Beijing Jiaotong University,
Beijing, 100044, China.}
\address{Beijing Key Laboratory of Biological Big Data and Topological Statistics, Beijing Jiaotong University,
Beijing, 100044, China.}
\email{shypan@bjtu.edu.cn}
\date{}
\subjclass[2020]{Primary 14F08; Secondary 14H30, 14H37}
\keywords{Equivariant derived category, semiorthogonal decomposition, exceptional object, finite group action, inertia}

\begin{document}
\begin{abstract}
We construct effective finite group actions on smooth projective
complex curves for which Conjecture A of Polishchuk and Van den
Bergh fails. For every $h\geq1$, an explicit $S_3$ action on the
smooth projective model of $y^2=t^{12h-5}+t$ has quotient of genus
$h$ and exactly one ramification orbit, with inertia of order
three. The fixed quotient for the three-cycle class consists
of two points, but its derived category admits no $\C$-linear
exact fully faithful embedding into the equivariant derived
category. More generally, when the coarse quotient has positive
genus, we prove that the maximum number of pairwise completely
orthogonal exceptional objects is
$\sum_i\lfloor e_i/2\rfloor$, where $e_i$ are the inertia orders.
We also characterize conjugacy-class decompositions with
embeddings linear over the coarse quotient: such a decomposition
exists if and only if
$|[g]\cap H_i|\leq\lfloor e_i/2\rfloor$ for every nonidentity
conjugacy class and every branch inertia group $H_i$.
The sufficiency proof uses coloured paths of type $A$ and
derived reflection functors, and produces kernels on the
natural fibre products. The known ordered curve decomposition
still yields a decomposition indexed by the connected components
of the fixed quotients.
\end{abstract}
\maketitle

\section{Introduction}

For a finite group action, the decomposition of orbifold
invariants into fixed sectors suggests a corresponding
decomposition of the equivariant derived category.
Polishchuk and Van den Bergh formulated this expectation
as Conjecture A in \cite{PV}. We recall its statement over
$\C$, treating the derived category of the empty scheme
as zero.

\begin{conjecture}[Polishchuk--Van den Bergh, Conjecture A]
\label{conj:original}
Let a finite group $G$ act effectively on a smooth
quasiprojective variety $X$. Suppose that every geometric
quotient $X^g/C_G(g)$ is smooth. Then $\Db_G(X)$ admits a
semiorthogonal decomposition with one subcategory
$\mathcal A_{[g]}$ for each conjugacy class, such that
\[
 \mathcal A_{[g]}\simeq\Db(X^g/C_G(g)).
\]
\end{conjecture}

Here $C_G(g)$ denotes the centralizer of $g$.
The conjecture is also stated in
\cite[Conjecture~1.1]{Krug}.
Bergh, Gorchinskiy, Larsen, and Lunts constructed an
elliptic-curve example showing that its conclusion can
fail for a non-effective action
\cite[Theorem~6.20]{BGLL}.
In this paper we give counterexamples with effective
actions on smooth projective curves.

\begin{theorem}\label{thm:main}
For every integer $h\geq1$, let $C_h$ be the smooth
projective model of
\begin{equation}\label{eq:family}
 y^2=t^{12h-5}+t.
\end{equation}
There is an effective $S_3$ action on $C_h$ for which:
\begin{enumerate}
\item $\genus(C_h)=6h-3$ and $\genus(C_h/S_3)=h$;
\item there is exactly one nonfree orbit, with stabilizer
      of order three;
\item the fixed quotient for the three-cycle class is
      $\pt\amalg\pt$, and that for the transposition class
      is empty;
\item there is no $\C$-linear exact fully faithful functor
      \[
       \Db(\pt\amalg\pt)\longrightarrow\Db_{S_3}(C_h).
      \]
\end{enumerate}
Consequently, all the smoothness and effectivity hypotheses
of Conjecture~\ref{conj:original} hold, but its conclusion
fails.
\end{theorem}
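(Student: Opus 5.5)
The plan is to construct the action explicitly, read off (1)--(3) from Riemann--Hurwitz and a fixed-point computation, and reduce (4) to an exceptional-object count on the quotient orbifold curve.

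\emph{Construction and parts (1)--(3).} Write $n=12h-5$ and fix a primitive cube root of unity $\zeta$. Since $n\equiv1\pmod 3$, the map $\sigma\colon(t,y)\mapsto(\zeta t,\zeta^2 y)$ preserves \eqref{eq:family}. The identity
\[
t^{-n}+t^{-1}=t^{-(n+1)}(t^n+t)
\]
shows that $\tau\colon(t,y)\mapsto(1/t,\ \epsilon\, y/t^{(n+1)/2})$ is also an automorphism. Here $(n+1)/2=6h-2$ and $\epsilon=\pm1$ is a sign to be fixed below. I will check that $\tau^2=1$ and $\tau\sigma\tau=\sigma^{-1}$ on both coordinates, so we get an $S_3$ action; it is visibly effective on the $t$-line. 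The genus is $(n-1)/2=6h-3$, since $n$ is odd.

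\emph{Fixed points and the ramification data.} The element $\sigma$ fixes only $t=0$ and $t=\infty$. Each of these is a Weierstrass point with a single preimage, giving two fixed points $p_0,p_\infty$, and $\tau$ swaps them. The transpositions $\tau\sigma^k$ fix only points with $t^2=\zeta^{k}$ on the $t$-line, where $y\neq0$. At such points the $y$-coordinate is sent to $\pm y$, the sign depending on $\epsilon$. I will choose $\epsilon$, possibly composing with the hyperelliptic involution, so that it is $-y$ in all three cases simultaneously; the three transpositions are conjugate, so checking one should suffice. This gives exactly one nonfree orbit $\{p_0,p_\infty\}$ with stabilizer $\langle\sigma\rangle$. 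Riemann--Hurwitz,
\[
12h-8=6(2g'-2)+2\cdot 2,
\]
then gives $g'=h$. Finally, $C_{S_3}(\sigma)=\langle\sigma\rangle$ acts trivially on $X^\sigma=\{p_0,p_\infty\}$, so the fixed quotient is $\pt\amalg\pt$, and transpositions have empty fixed locus. These quotients are smooth, which yields (1)--(3).

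\emph{Part (4).} A $\C$-linear fully faithful exact functor from $\Db(\pt\amalg\pt)$ produces two exceptional objects $E_1,E_2$ with $\RHom(E_i,E_j)=0$ for $i\neq j$. So it suffices to show that $\Db_{S_3}(C_h)$ contains no such pair. This category is the derived category of the orbifold curve $\mathcal X=[C_h/S_3]$. Since $\Coh_{S_3}(C_h)$ is hereditary, every indecomposable object is a shifted sheaf, and we may take $E_i$ to be sheaves.

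\emph{Step 1: exceptional sheaves are torsion at the stacky point.} Let $E$ be a sheaf of positive rank and let $\pi\colon\mathcal X\to Y$ be the coarse map, where $Y$ has genus $h\geq1$. The unit $\mathcal O\to\mathcal{E}nd(E)$ is split by $\tfrac1r\operatorname{tr}$, so $H^1(\mathcal X,\mathcal O)=H^1(Y,\mathcal O_Y)\neq0$ injects into $\Ext^1(E,E)$, and $E$ is not exceptional. Torsion sheaves supported at free orbits are modules over a local ring of a smooth point, so they have nonzero self-$\Ext^1$. Hence any exceptional object is a shift of a torsion sheaf supported at the unique stacky point.

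\emph{Step 2: count inside the stacky point.} Near that point, torsion sheaves form the category of nilpotent representations of the cyclic quiver with three vertices. Its simples $S_0,S_1,S_2$ satisfy $\Ext^1(S_i,S_{i+1})\neq0$, and any two distinct vertices of a $3$-cycle are adjacent. Every exceptional module is uniserial of length at most $2$ and has a simple top and socle. I will check case by case that any two such modules have a nonzero $\Hom$ or $\Ext^1$ in some direction. This gives at most $\lfloor 3/2\rfloor=1$ pairwise completely orthogonal exceptional objects, which proves (4).

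\emph{Main obstacle.} I expect the difficulty to lie in the explicit sign bookkeeping for $\tau$ (that it is an involution and acts freely) and in the exhaustive check of Step 2. Step 1 is handled cleanly by the trace splitting.
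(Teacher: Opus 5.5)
Your proposal is correct and follows the paper's proof essentially step for step. Your generators are the paper's $r,s$ once you take $\epsilon=-1$, the choice that makes $\tau$ negate $y$ over $t=\pm1$, where $y^2=\pm2\neq0$. The quotient genus comes from the same Riemann--Hurwitz count, and (4) is reduced exactly as in the paper to torsion sheaves on the order-three orbit, via heredity, the trace splitting of $\mathcal{E}nd(E)$, and the exclusion of free orbits.

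The only difference is the local count. You classify exceptional modules as uniserials of length at most two and check pairs by hand, but leave that classification unproved. The paper avoids it: $\chi(v,v)=1$ forces $\Delta v=\varepsilon_a-\varepsilon_b$, and complete orthogonality forces disjoint endpoint pairs, which is impossible among three indices. That argument also gives the bound $\lfloor e/2\rfloor$ for every $e$.
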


The smallest example is the genus-three curve $y^2=t^7+t$,
whose quotient is the elliptic curve $b^2=u^3-4u$.
The obstruction concerns the derived category of the
entire fixed quotient. For a disjoint union of $n$ points,
this category is $\Db(\C^n)$, and the images of its simple
objects under a fully faithful functor must be exceptional
and completely orthogonal in both directions. An ordered
exceptional collection of length $n$ need not satisfy this
condition. In our examples, the two ramification
contributions form an exceptional pair of type $A_2$,
whereas the three-cycle sector requires two completely
orthogonal exceptional objects.

Theorem~\ref{thm:main} follows from a general calculation
of the maximum length of a completely orthogonal
exceptional collection.

\begin{definition}\label{def:capacity}
An object $E$ of a $\C$-linear triangulated category is
\emph{exceptional} if $\RHom(E,E)\simeq\C$.
Exceptional objects $E_1,\ldots,E_m$ are
\emph{completely orthogonal} if
\[
 \Hom(E_i,E_j[q])=0
 \qquad(q\in\Z,\ i\ne j).
\]
Write $\oc(\mathcal T)$ for the supremum of the lengths
of such collections in $\mathcal T$, allowing the empty
collection.
\end{definition}

\begin{theorem}\label{thm:capacity}
Let $G$ act effectively on a smooth connected projective
complex curve $C$, and put $Y=C/G$. Let $e_1,\ldots,e_b$
be the inertia orders at the branch points.
If $\genus(Y)\geq1$, then
\begin{equation}\label{eq:capacity}
 \oc\bigl(\Db_G(C)\bigr)
 =
 \sum_{i=1}^b\left\lfloor\frac{e_i}{2}\right\rfloor.
\end{equation}
In particular, a $\C$-linear exact fully faithful functor
$\Db(\C^n)\longrightarrow\Db_G(C)$ exists if and only if
$n$ does not exceed the right-hand side of
\eqref{eq:capacity}.
\end{theorem}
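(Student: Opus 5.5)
The plan is to reduce everything to the hereditary abelian category $\Coh_G(C)$ and then to a combinatorial count in tubes. Since $C$ is a smooth curve, $\Coh_G(C)$ is hereditary: $\Ext^q$ vanishes for $q\geq 2$, because equivariant Ext groups are $G$-invariants of ordinary Ext groups and $|G|$ is invertible. Hence every object of $\Db_G(C)$ is the direct sum of its shifted cohomology sheaves. An exceptional object $E$ has $\Hom(E,E)=\C$, so it is indecomposable, and therefore $E\simeq F[k]$ for a single equivariant sheaf $F$ with $\Hom(F,F)=\C$ and $\Ext^1(F,F)=0$. Shifts do not affect complete orthogonality, so it suffices to bound collections of exceptional sheaves.

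\emph{Step 1: no exceptional sheaf has positive rank when $\genus(Y)\geq1$.} Write $F=T\oplus V$ with $T$ torsion and $V$ locally free. Indecomposability forces $F$ to be either torsion or locally free. Suppose $F$ is locally free and let $\pi\colon C\to Y$ be the quotient map. By Serre duality,
\[
\Ext^1_G(F,F)^\vee\simeq\Hom_G(F,F\otimes\omega_C)=H^0\bigl(Y,(\pi_*\mathcal{E}nd(F)\otimes\omega_C)^G\bigr).
\]
The inclusion $\pi^*\omega_Y\subset\omega_C$ (away from the ramification it is an isomorphism) embeds $H^0(Y,(\pi_*\mathcal{E}nd F)^G\otimes\omega_Y)$ into this space. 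The identity endomorphism together with the normalised trace splits $\mathcal O_Y$ off $(\pi_*\mathcal{E}nd F)^G$. Therefore $\Ext^1_G(F,F)^\vee$ contains $H^0(Y,\omega_Y)$, which is nonzero because $\genus(Y)\geq1$. So $\Ext^1_G(F,F)\neq0$, and every exceptional sheaf is torsion.

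\emph{Step 2: localisation at orbits.} A torsion indecomposable sheaf is supported on a single $G$-orbit $Gx$. The equivariant torsion sheaves on $Gx$ form the category of finite-length modules over $\widehat{\mathcal O}_{C,x}\rtimes G_x$, where the stabiliser $G_x\simeq\Z/e$ is cyclic. This is the tube of rank $e$: nilpotent representations of the cyclic quiver with $e$ vertices, with Auslander--Reiten translate $\tau$ equal to the twist by the local character, and Serre duality reading $\Ext^1(X,Y)\simeq D\Hom(Y,\tau X)$.

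\begin{itemize}
\item For $e=1$ (free orbits and nonbranch points), every sheaf $X$ in the tube satisfies $\tau X=X$, so $\Ext^1(X,X)\neq0$ and no exceptional object exists.
\item Objects supported on different orbits are completely orthogonal, so the maximum is additive over the branch orbits.
\end{itemize}

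\emph{Step 3: the tube count, which I expect to be the main combinatorial obstacle.} In a tube of rank $e$, the exceptional indecomposables are exactly the uniserial modules of length $\ell<e$. Such a module corresponds to an arc of $\ell$ consecutive simples on a cycle of $e$ vertices. Two such modules $X,Y$ are completely orthogonal iff $\Hom(X,Y)=\Hom(Y,X)=0$ and $\Hom(Y,\tau X)=\Hom(X,\tau Y)=0$. I would check that this holds iff the arcs are disjoint and moreover non-adjacent, i.e.\ separated by at least one vertex on each side. Overlapping arcs always give a nonzero map in one direction, and adjacent arcs give a nonzero map to the $\tau$-shift.
\begin{itemize}
\item Upper bound: each of $m$ arcs uses at least one vertex plus one gap vertex, so $2m\leq e$ and $m\leq\lfloor e/2\rfloor$.
\item Lower bound: the simples at alternate vertices $0,2,4,\ldots$ realise $m=\lfloor e/2\rfloor$.
\end{itemize}
Summing over the branch orbits gives \eqref{eq:capacity}.

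\emph{Step 4: the ``in particular'' clause.} The images of the $n$ simple objects of $\Db(\C^n)$ under a $\C$-linear fully faithful functor form a completely orthogonal exceptional collection, so $n\leq\oc$. Conversely, given completely orthogonal exceptional objects $E_1,\dots,E_n$, the functor $(V_1,\dots,V_n)\mapsto\bigoplus_i V_i\otimes E_i$ is exact. It is fully faithful because its Hom spaces are $\bigoplus_i\Hom(V_i,V_i)$, by the vanishing $\RHom(E_i,E_j)=0$ for $i\neq j$ and $\RHom(E_i,E_i)=\C$.
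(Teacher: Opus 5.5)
\textbf{Gap in Step~3.} Your reduction steps are sound. Step~1 is the Serre-dual form of the paper's embedding $H^1(Y,\mathcal O_Y)\hookrightarrow\Ext^1_G(V,V)$, the free-orbit case and additivity over orbits are correct, and the lower bound and Step~4 agree with the paper. The problem is your claim that two exceptional uniserials are completely orthogonal if and only if their arcs are disjoint and non-adjacent. It is false, and your upper bound uses exactly its false direction: strictly nested arcs can be completely orthogonal.

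Take $e=4$ and normalize $\tau S_i=S_{i+1}$, as forced by $\Ext^1(S_i,S_{i+1})\neq0$. Let $X$ be the uniserial module with composition factors $S_0,S_1,S_2$ from top to socle (the local form of $\mathcal O_{3D}$), and let $Y=S_1$. Then $\Hom(X,Y)=\Hom(X,\tau Y)=0$ because the top of $X$ is $S_0$. Also $\Hom(Y,X)=\Hom(Y,\tau X)=0$ because the socles of $X$ and $\tau X$ are $S_2$ and $S_3$. So $X$ and $Y$ are completely orthogonal exceptional objects with arcs $\{0,1,2\}\supset\{1\}$. The Euler form agrees: $\Delta[X]=\varepsilon_2-\varepsilon_3$ and $\Delta[Y]=\varepsilon_1-\varepsilon_0$ are orthogonal. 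Thus ``overlapping arcs always give a nonzero map in one direction'' fails, and the count ``one vertex plus one gap vertex per arc'' has no justification.

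\textbf{The fix.} Track the boundary points of an arc rather than the arc itself. Attach to the arc $[a,a+\ell-1]$ the set $\{a-1,a+\ell-1\}$; it has two elements because $1\le\ell<e$. If two exceptional uniserials share a boundary point, they are not completely orthogonal:
\begin{itemize}
\item equal socles make the shorter one a submodule of the longer;
\item equal tops make the shorter one a quotient of the longer;
\item if the socle of $Y$ is $S_{a-1}$, then the top $S_a$ of $X$ is the socle of $\tau Y$, so $\Hom(X,\tau Y)\neq0$; the remaining case is symmetric.
\end{itemize}
Hence $m$ completely orthogonal exceptional objects on one orbit give $m$ pairwise disjoint two-element subsets of $\Z/e$, and $2m\le e$.

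This is precisely the paper's argument. The paper packages the boundary pair as $\Delta v=\varepsilon_{a+\ell-1}-\varepsilon_{a-1}$ and gets disjointness in one line from $\chi(v,w)+\chi(w,v)=\Delta v\cdot\Delta w=0$. That route needs no case analysis and no appeal to Auslander--Reiten theory.
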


Positive genus excludes exceptional vector bundles.
Since the category of equivariant coherent sheaves on
a smooth curve is hereditary, every exceptional object
is therefore a shifted torsion sheaf supported on one
ramification orbit. For cyclic inertia of order $e$,
the integral Euler pairing identifies the successive
differences of its class with a root of squared length
two. Complete orthogonality forces the corresponding
pairs of endpoints to be disjoint, giving the bound
$\lfloor e/2\rfloor$. Alternating character simples
attain this bound.

For embeddings compatible with the coarse quotient,
the obstruction can be localized at each branch orbit.
Let $H_i$ be an inertia group at that orbit and
$e_i=|H_i|$. Theorem~\ref{thm:fusioncriterion} proves
that Conjecture~\ref{conj:original} admits a realization
by $\Perf(Y)$-linear embeddings if and only if
\[
 |[g]\cap H_i|
 \leq\left\lfloor\frac{e_i}{2}\right\rfloor
 \qquad\text{for every }i\text{ and }[g]\ne[1].
\]
This criterion has no genus assumption and applies to
smooth connected quasiprojective complex curves.
When it holds, the identity block can be chosen as
$\pi^*\Db(Y)$, the nonidentity blocks can occur in any
prescribed total order, and the embeddings are given
by equivariant kernels on
$(C^g/C_G(g))\times_Y C$.

The necessity follows from the local orthogonality bound.
For sufficiency, the branch category is equivalent to
$\Db(\mathrm{mod}\,\C A_{e_i-1})$.
The multiplicities $|[g]\cap H_i|$ partition its
$e_i-1$ vertices. The numerical bound permits a colouring
of the path in which adjacent vertices have different
colours. Orienting its edges according to a prescribed
class order gives completely orthogonal simples within
each colour and semiorthogonality between colours.
Derived reflection equivalences transport these blocks
to the original branch category. Since different branch
categories are completely orthogonal, the local
constructions combine in the same global class order.

The ordered curve decomposition itself is known
\cite[Theorem~1.2]{Polishchuk}
\cite[Theorem~4.3.1]{PV}.
It consists of the coarse-curve category together with
$e_i-1$ exceptional objects at each branch orbit.
We recall it with a direct proof in
Section~\ref{sec:refinement}. Counting the connected
components of the fixed quotients then gives their
separate contributions, and proves
Theorem~\ref{thm:refinedcurve}.
Such connected-inertia decompositions also occur in
Lim's theorem for elementary abelian two-group
stabilizers \cite[Theorem~1.1]{Lim}.
Grouping all components belonging to one conjugacy
class requires the additional orthogonality addressed
by Theorem~\ref{thm:fusioncriterion}.

The counterexamples contradict the curve assertion
in Remark~4.3.2 of the arXiv v4 version of \cite{PV},
as well as \cite[Proposition~4.3.1, p.~1495]{LP}.
Section~\ref{sec:literature} explains the grouping step
in those arguments. For a linear action on a vector
space, every fixed quotient is irreducible, so the
original and connected-component formulations coincide.
The curve examples therefore do not give counterexamples
for linear reflection-group actions.

\subsection*{Conventions}

All schemes and representations are over $\C$.
We write $\Db(X)=\Db(\Coh(X))$ and
$\Db_G(X)=\Db(\Coh_G(X))=\Db([X/G])$.
The notation
$\langle\mathcal A_1,\ldots,\mathcal A_s\rangle$
means that these subcategories generate the ambient
category and that
\[
 \Hom(\mathcal A_j,\mathcal A_i[q])=0
 \qquad(j>i,\ q\in\Z).
\]
For coherent objects with finite-dimensional total Ext,
set
\[
 \chi(E,F)=\sum_q(-1)^q\dim_\C\Ext^q(E,F).
\]
The same notation denotes the induced pairing on their
Grothendieck group. A branch orbit always carries its
reduced scheme structure.

\subsection*{Acknowledgments}
During the preparation of this work, the authors used ChatGPT, an AI assistant developed by OpenAI, as a tool for exploring proof strategies, refining arguments, and drafting and revising the manuscript. All mathematical results and the final text were verified by the author, who take full responsibility for the content of this paper. This work was supported by the Beijing Natural Science Foundation (Grant Nos. 1262017, 1252011).

\section{Exceptional objects and cyclic inertia}
\label{sec:capacity}

\subsection{Support of exceptional objects}
For a finite effective action on a smooth connected curve, every
stabilizer is cyclic and acts faithfully on the tangent line.
Indeed a finite-order automorphism fixing a point and having
derivative one is the identity in characteristic zero. Thus the
stabilizer embeds into $\C^\times$. There are only finitely many
nonfree orbits.

\begin{lemma}\label{lem:torsion}
Under the hypotheses of Theorem~\ref{thm:capacity}, every exceptional
object of $\Db_G(C)$ is a shift of an indecomposable finite-length
equivariant sheaf supported on one nonfree orbit.
\end{lemma}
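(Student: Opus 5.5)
Since $\Coh_G(C)$ is hereditary (it has global dimension one: equivariant Ext is the $G$-invariant part of ordinary Ext on the curve, and $|G|$ is invertible), every object of $\Db_G(C)$ is isomorphic to the direct sum of its shifted cohomology sheaves. If $E$ is exceptional, then $\End(E)=\C$ forces $E$ to be indecomposable. Hence $E\simeq F[k]$ for a single indecomposable sheaf $F$ with $\RHom(F,F)\simeq\C$. The plan is then to rule out a nonzero locally free part and to locate the support.

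\emph{Step 1: $F$ is torsion or torsion free.} Write the torsion subsheaf $T\subset F$ and $F/T$. Both are $G$-stable, and on a smooth curve the sequence $0\to T\to F\to F/T\to0$ splits non-equivariantly. The splitting can be averaged over $G$, or one can use $\Ext^1_G(F/T,T)=\Ext^1(F/T,T)^G=0$ because $F/T$ is locally free and $T$ is torsion. So $F\simeq T\oplus F/T$ equivariantly, and indecomposability gives either $F=T$ or $F$ locally free.

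\emph{Step 2: no exceptional equivariant vector bundles.} This is the main point, and it is where $\genus(Y)\ge1$ enters. Suppose $F$ is a $G$-equivariant vector bundle of rank $r>0$ with $\Ext^1_G(F,F)=0$ and $\Hom_G(F,F)=\C$. Then $\chi_G(F,F)=1$. I would compute $\chi_G(F,F)=\chi(C,\mathcal End(F))^G$ via the equivariant Riemann--Roch, that is, Kawasaki's formula for the stack $[C/G]$. Alternatively, push forward to $Y$: the invariant sheaf $\pi_*^G\mathcal End(F)$ is a vector bundle on $Y$, since $Y$ is a smooth curve, of rank $r^2$, and its degree is nonpositive. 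Indeed, $\mathcal End(F)$ is self-dual, and the invariant pushforward of a self-dual bundle has degree $-\sum_i(\text{nonnegative local corrections from the inertia characters})\le0$.

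Riemann--Roch on $Y$ then gives
\[
\chi=\deg+r^2(1-\genus(Y))\le0<1,
\]
a contradiction. An alternative cleaner route is Serre duality on $[C/G]$: the dualizing sheaf $\omega_C$ with its natural linearization satisfies $\Ext^1_G(F,F)\simeq\Hom_G(F,F\otimes\omega_C)^\vee$. The identity tensored with a nonzero $G$-invariant section of $\omega_C$ gives a nonzero element of $\Hom_G(F,F\otimes\omega_C)$. Such a section exists because $H^0(C,\omega_C)^G\simeq H^0(Y,\omega_Y)\ne0$ when $\genus(Y)\ge1$, pulling back differentials from $Y$. I would use this duality argument, as it avoids the degree bookkeeping. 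The only point needing care is that the invariant holomorphic differentials on $C$ contain $\pi^*H^0(Y,\omega_Y)$, which is immediate.

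\emph{Step 3: support.} Now $F$ is an indecomposable finite-length equivariant sheaf. It decomposes equivariantly according to the $G$-orbits of its support, since sheaves with disjoint supports have no Homs. So indecomposability puts the support on a single orbit $Gx$. If the orbit is free, then $\Coh_G$ of sheaves supported on $Gx$ is equivalent to finite-length modules over $\mathcal O_{C,x}$, which is a DVR. There every nonzero module $M$ satisfies $\dim\Ext^1(M,M)=\dim\Hom(M,M)\ge1$, by $\chi=0$ for torsion sheaves on a curve, so $F$ is not exceptional. Hence the orbit is nonfree, which completes the proof.
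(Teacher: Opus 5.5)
Your proof is correct, and its skeleton matches the paper's: the hereditary splitting into shifted cohomology sheaves, the equivariant torsion/locally free splitting via $\Ext^1_G(V,T)=0$, the decomposition over orbits, and the vanishing Euler pairing at a free orbit.

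The one genuine difference is how you exclude exceptional vector bundles. The paper splits $\mathcal O_C$ off $\mathcal E nd(V)$ equivariantly, using the identity and the normalized trace, and obtains
$H^1(Y,\mathcal O_Y)\cong H^1(C,\mathcal O_C)^G\hookrightarrow\Ext^1_G(V,V)$.
You work on the Serre-dual side instead: a nonzero invariant differential $s\in\pi^*H^0(Y,\omega_Y)$ gives $\mathrm{id}_F\otimes s\neq0$ in $\Hom_G(F,F\otimes\omega_C)\cong\Ext^1_G(F,F)^\vee$. The two arguments are Serre-dual versions of one another, and both yield $\dim\Ext^1_G(F,F)\geq\genus(Y)$.

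Your version needs no trace retraction, because $H^0$ is left exact and so the inclusion $\omega_C\hookrightarrow\mathcal E nd(F)\otimes\omega_C$ already injects on global sections. The paper needs the splitting to get injectivity on $H^1$. The price is equivariant Serre duality, which you should justify briefly. The trace $H^1(C,\omega_C)\to\C$ is invariant under automorphisms, so the duality pairing on $C$ is $G$-equivariant. Taking invariants of an equivariant perfect pairing still gives a perfect pairing, since $|G|$ is invertible. The paper's route uses only exactness of invariants for the finite map $\pi$ and $(\pi_*\mathcal O_C)^G=\mathcal O_Y$, so it is slightly more self-contained.

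Your alternative degree argument is also valid, since $\pi^*(\pi_*W)^G\hookrightarrow W$ has torsion cokernel and hence $\deg(\pi_*W)^G\leq0$ when $\deg W=0$. It is unnecessary given the duality argument.
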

\begin{proof}
The category $\Coh_G(C)$ is hereditary. For coherent sheaves $F,H$,
one has $\Ext^q_G(F,H)=\Ext^q_C(F,H)^G$; invariants are exact over
$\C$. On a smooth curve the latter Ext groups vanish for $q>1$.
For example, this follows from the local-to-global spectral sequence:
the sheaf $\mathcal E xt^1(F,H)$ has finite support, and higher local
Ext and second sheaf cohomology vanish. In a hereditary category
every bounded complex splits as the direct sum of its shifted
cohomology sheaves, by the truncation triangles. An exceptional
object has no nontrivial direct-sum decomposition, so it is a shift
of one indecomposable sheaf.

A coherent equivariant sheaf splits as a torsion sheaf and a vector
bundle. In fact its torsion-free quotient $V$ is locally free, and
for a finite-length sheaf $T$,
$\Ext^1_G(V,T)=H^1(C,V^\vee\otimes T)^G=0$.
If $V$ has positive rank, the identity and normalized trace split
$\mathcal O_C$ off $\mathcal E nd(V)$ equivariantly. Therefore
\[
 H^1(Y,\mathcal O_Y)
 \cong H^1(C,\mathcal O_C)^G
 \hookrightarrow \Ext^1_G(V,V).
\]
The first isomorphism follows from the finite quotient map and exact
invariants. Since $\genus(Y)\geq1$, a vector bundle cannot be
exceptional.

A torsion sheaf splits equivariantly over the $G$-orbits in its
support. Indecomposability leaves just one orbit. At a free orbit,
equivariant finite-length sheaves are equivalent to finite-length
modules over a discrete valuation ring. Its unique simple module
has one-dimensional Hom and Ext$^1$ with itself, so the Euler
pairing is zero on the finite-length Grothendieck group. No object
supported on a free orbit can consequently be exceptional.
\end{proof}

\subsection{The local Euler pairing}
Fix a nonfree orbit $D$ with inertia group $H$ of order $e\geq2$.
Equivariant completion along $D$ identifies its finite-length category
with finite-length modules over
$A=R\rtimes H$, where $R=\C[[t]]$. Choose a generator $\sigma$ of
$H$ and a primitive $e$th root $\zeta$ so that $\sigma(t)=\zeta t$.
Such a parameter exists by averaging a local parameter with the
appropriate character. Induction from a chosen point of $D$ gives
the equivalence with the orbit category.

These identifications preserve Ext. For finite-support sheaves the
sheaf Ext groups have finite support and no higher cohomology;
completion is flat and preserves Ext between the corresponding finite
modules. Hence computations over $A$ give the global equivariant
Ext groups for objects supported on $D$.

Let $\alpha(\sigma)=\zeta$, and let $S_i$ be the simple module
with character $\alpha^i$, for $i\in\Z/e\Z$. An equivariant
resolution is
\[
 0\longrightarrow R\otimes\alpha^{i+1}
 \xrightarrow{\ t\ }R\otimes\alpha^i
 \longrightarrow S_i\longrightarrow0.
\]
It gives
\begin{equation}\label{eq:simples}
 \Hom_A(S_i,S_j)=\C^{\delta_{ij}},\qquad
 \Ext^1_A(S_i,S_j)=\C^{\delta_{j,i+1}},\qquad
 \Ext^q_A(S_i,S_j)=0\quad(q>1).
\end{equation}
All indices in \eqref{eq:simples} are taken modulo $e$; the notation
$\C^0$ means the zero vector space.

The classes of the $S_i$ form a basis of the finite-length
Grothendieck group. For $v,w\in\Z^e$, equation~\eqref{eq:simples}
implies
\[
 \chi(v,w)=\sum_i v_iw_i-\sum_i v_iw_{i+1}.
\]
Define $\Delta v=(v_i-v_{i+1})_i$. Then
\begin{equation}\label{eq:euler}
 \chi(v,v)=\tfrac12\|\Delta v\|^2,
 \qquad
 \chi(v,w)+\chi(w,v)=\Delta v\cdot\Delta w.
\end{equation}

\begin{lemma}\label{lem:localbound}
Among the exceptional objects supported on $D$, the maximum number
that can be pairwise completely orthogonal is $\lfloor e/2\rfloor$.
\end{lemma}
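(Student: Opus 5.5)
The plan has two halves: an upper bound from the Euler form \eqref{eq:euler} and a lower bound from an explicit family of simples.

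\emph{Upper bound.} Let $E_1,\dots,E_m$ be pairwise completely orthogonal exceptional objects supported on $D$. By Lemma~\ref{lem:torsion}, each $E_k$ is a shifted finite-length module, so it has a class $v^{(k)}\in\Z^e$ in the finite-length Grothendieck group; a shift only changes the sign of the class. Exceptionality gives $\chi(v^{(k)},v^{(k)})=1$. By \eqref{eq:euler} this means $\|\Delta v^{(k)}\|^2=2$, so $\Delta v^{(k)}$ has exactly two nonzero entries, each equal to $\pm1$. The entries of $\Delta v$ sum to zero (the indices are cyclic), so these two entries have opposite signs. Hence $\Delta v^{(k)}=\pm(\epsilon_a-\epsilon_b)$ for some $a\neq b$, and we call $\{a,b\}$ the \emph{endpoints} of $E_k$.

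Complete orthogonality makes $\chi(v^{(k)},v^{(l)})=\chi(v^{(l)},v^{(k)})=0$ for $k\neq l$, so $\Delta v^{(k)}\cdot\Delta v^{(l)}=0$. This inner product does not by itself make the endpoint pairs disjoint: $\epsilon_a-\epsilon_b$ and $\epsilon_a+\epsilon_b-\ldots$ cannot occur, but $\epsilon_a-\epsilon_b$ and $\epsilon_c-\epsilon_d$ sharing exactly one index would have product $\pm1\neq0$. If the pairs shared both indices, the product would be $\pm2$. So pairs sharing one or two indices are excluded, and the endpoint pairs of distinct $E_k$ are disjoint. We therefore have $m$ disjoint $2$-element subsets of $\Z/e\Z$, which gives $m\le\lfloor e/2\rfloor$.

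One subtlety is that $\Delta v=0$ would force $v$ constant, but then $\chi(v,v)=0$, so this case is already excluded. No genuine obstacle is expected in this half.

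\emph{Lower bound.} Take the simples $S_0,S_2,S_4,\dots,S_{2(\lfloor e/2\rfloor-1)}$. Each is exceptional: by \eqref{eq:simples}, $\Hom(S_i,S_i)=\C$, and $\Ext^1(S_i,S_i)=\C^{\delta_{i,i+1}}=0$ because $e\ge2$. For distinct $i,j$ in this list, all Ext groups vanish unless $j\equiv i\pm1 \pmod e$. The indices are even and at most $e-2$, so two of them differ by at least $2$. The only cyclic wrap-around is between $0$ and $2(\lfloor e/2\rfloor-1)$. When $e$ is even this index is $e-2$, which is not adjacent to $0$ since $e-1$ is skipped. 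When $e$ is odd it is $e-3$, which is also not adjacent to $0$.

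The one case needing attention is $e=2$. There the list is just $\{S_0\}$, and $S_0$ is exceptional because $\Ext^1(S_0,S_0)=\C^{\delta_{0,1}}=0$ in $\Z/2$. This gives $\lfloor e/2\rfloor$ completely orthogonal exceptional objects.

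Finally, \eqref{eq:simples} computes Ext over $A$, and the identification recalled just before the lemma makes these equal to the global equivariant Ext groups. The main step is therefore just the sign and disjointness bookkeeping in the upper bound.
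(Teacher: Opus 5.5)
Your proposal is correct and follows the paper's proof essentially step for step. The Euler form forces $\Delta v=\varepsilon_a-\varepsilon_b$, the vanishing symmetric pairing forces disjoint endpoint pairs and hence $2m\leq e$, and the simples $S_0,S_2,\ldots,S_{2\lfloor e/2\rfloor-2}$ attain the bound. One small point: you do not need Lemma~\ref{lem:torsion}, whose hypotheses include $\genus(Y)\geq1$. Any object with cohomology supported on $D$ already has a class in the finite-length Grothendieck group, and avoiding that lemma keeps the statement genus-free, as the paper intends.
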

\begin{proof}
If $v$ is the class of an exceptional object, then $\chi(v,v)=1$.
By \eqref{eq:euler}, the integral vector $\Delta v$ has squared
length two and coordinate sum zero. Thus
$\Delta v=\varepsilon_a-\varepsilon_b$ for two distinct indices
$a,b$, where $\varepsilon_i$ is a standard basis vector of $\Z^e$.

Two vectors of this form have zero dot product exactly when their
two-element endpoint sets are disjoint: identical endpoint sets give
dot product $2$ or $-2$, and exactly one common endpoint gives $1$
or $-1$. Complete orthogonality implies zero symmetric Euler pairing,
so \eqref{eq:euler} forces disjoint endpoint sets. A collection of
length $m$ therefore uses $2m$ distinct indices, proving $2m\leq e$.

The bound is attained. The simples with indices
$0,2,\ldots,2\lfloor e/2\rfloor-2$ are exceptional, and no two
chosen indices are adjacent in the cyclic order. Equation~\eqref{eq:simples}
shows complete orthogonality. Inducing these modules to $D$ gives
the desired equivariant sheaves.
\end{proof}

\begin{proof}[Proof of Theorem~\ref{thm:capacity}]
By Lemma~\ref{lem:torsion}, each exceptional object lies on one
nonfree orbit. Lemma~\ref{lem:localbound} gives the upper bound at
each orbit. Objects with disjoint supports are completely orthogonal,
so choosing the local collections just constructed attains the sum
in \eqref{eq:capacity}.

The images of the $n$ simple objects of $\Db(\C^n)$ under any exact
fully faithful functor are completely orthogonal exceptional objects.
This gives necessity. Conversely, any collection of $n$ such objects
defines a fully faithful functor: on the $i$th summand of
$\Db(\C^n)$ send a bounded complex of vector spaces $V$ to
$V\otimes_\C E_i$, and take the direct sum. This proves sufficiency.
\end{proof}

\begin{corollary}\label{cor:necessary}
For a curve as in Theorem~\ref{thm:capacity}, let
$n_g=\#(C^g/C_G(g))$ for $g\ne1$. If Conjecture~\ref{conj:original}
holds, then $n_g\leq\sum_i\lfloor e_i/2\rfloor$ for every such $g$.
\end{corollary}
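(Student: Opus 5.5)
The plan is to reduce Corollary~\ref{cor:necessary} directly to Theorem~\ref{thm:capacity}. Suppose Conjecture~\ref{conj:original} holds for the given action. Then for each conjugacy class $[g]$ there is an admissible subcategory $\mathcal A_{[g]}\subset\Db_G(C)$ together with an equivalence $\mathcal A_{[g]}\simeq\Db(C^g/C_G(g))$. For $g\ne1$ the fixed locus $C^g$ is a finite set of points, since an effective action has no nonidentity element fixing a curve pointwise. Hence the quotient $C^g/C_G(g)$ is a finite reduced scheme with $n_g$ points, and its derived category is $\Db(\C^{n_g})$.

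Composing this equivalence with the inclusion $\mathcal A_{[g]}\hookrightarrow\Db_G(C)$ gives a $\C$-linear exact fully faithful functor
\[
\Db(\C^{n_g})\longrightarrow\Db_G(C).
\]
The inclusion of a full triangulated subcategory is exact and fully faithful. The equivalence is $\C$-linear because, in the conjecture, it is understood as an equivalence of $\C$-linear triangulated categories. I would state this implicit linearity convention explicitly.

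Next I apply the ``in particular'' clause of Theorem~\ref{thm:capacity}, which is available because $\genus(Y)\geq1$ is among the hypotheses. The existence of such a functor forces $n_g\leq\sum_i\lfloor e_i/2\rfloor$. If one prefers to avoid that clause, the argument can be run by hand. The images of the $n_g$ simple objects (the skyscrapers at the points) are exceptional and completely orthogonal, because full faithfulness preserves all graded Hom spaces. The bound then follows from the equality $\oc(\Db_G(C))=\sum_i\lfloor e_i/2\rfloor$.

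No step is a real obstacle. The only points needing care are two bookkeeping checks. First, the fixed quotient is zero-dimensional and reduced, which is automatic for the geometric quotient of finitely many points. Second, the empty case $n_g=0$ is trivially fine.
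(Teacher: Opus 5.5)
Your proposal is correct and matches the paper's proof. Each block $\mathcal A_{[g]}$ gives an exact fully faithful embedding of $\Db(\C^{n_g})$ into $\Db_G(C)$, and the ``in particular'' clause of Theorem~\ref{thm:capacity} then gives $n_g\leq\sum_i\lfloor e_i/2\rfloor$; your extra checks (finiteness and reducedness of $C^g/C_G(g)$, and the implicit $\C$-linearity of the equivalence) only spell out what the paper's one-line proof leaves tacit.
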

\begin{proof}
Each required subcategory supplies a fully faithful embedding of
$\Db(\C^{n_g})$. Apply Theorem~\ref{thm:capacity}.
\end{proof}

The genus assumption is needed only to exclude exceptional vector
bundles. Lemma~\ref{lem:localbound} is a local statement with no
assumption on the genus of the quotient.

\section{An explicit family of counterexamples}
\label{sec:family}

Fix $h\geq1$ and a primitive cube root $\omega$. On the function
field of \eqref{eq:family}, consider
\begin{equation}\label{eq:action}
 r(t,y)=(\omega t,\omega^2y),\qquad
 s(t,y)=(t^{-1},-y/t^{6h-2}).
\end{equation}

\begin{proposition}\label{prop:action}
The maps in \eqref{eq:action} extend to automorphisms of $C_h$ and
generate an effective $S_3$ action. The only nonfree orbit is
$D=\{P_0,P_\infty\}$, where $P_0$ and $P_\infty$ lie above
$t=0$ and $t=\infty$. Its stabilizers are $\langle r\rangle$.
\end{proposition}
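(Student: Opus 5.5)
Set $n=12h-5$, so $6h-2=(n+1)/2$ and $n\equiv 1 \pmod 3$. The curve $y^2=t^n+t$ is hyperelliptic, with $f(t)=t^n+t=t(t^{n-1}+1)$ squarefree of odd degree $n$. Hence $\genus(C_h)=(n-1)/2=6h-3$, and $C_h$ has a single point $P_\infty$ over $t=\infty$. The plan has four steps: check that $r$ and $s$ preserve the equation, check the $S_3$ relations, confirm effectivity, and find the nonfree orbits by computing the fixed points of the nonidentity elements.

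\emph{Invariance of the equation.} For $r$: $(\omega^2y)^2=\omega^4y^2=\omega y^2$, while $(\omega t)^n+\omega t=\omega(t^n+t)$ because $\omega^n=\omega$. For $s$: $y^2/t^{n+1}$ should equal $t^{-n}+t^{-1}=(1+t^{n-1})/t^n$, and multiplying $y^2=t^n+t$ by $t^{-n-1}$ gives exactly this. So both maps are automorphisms of the function field, and hence of its smooth projective model $C_h$.

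\emph{The $S_3$ relations.} First, $r^3=1$ is immediate. For $s^2$ we get $t\mapsto t$ and
\[
y\mapsto -\bigl(-y/t^{6h-2}\bigr)/t^{-(6h-2)}=y,
\]
so $s^2=1$. For the braid relation we compare $srs$ with $r^{-1}$, keeping track of the convention for composing field automorphisms. On $t$, $srs$ acts as $t\mapsto \omega^{\pm1}t$. On $y$, the factor $t^{6h-2}$ picks up $\omega^{\pm(6h-2)}=\omega^{\mp1}$, because $6h-2\equiv 1 \pmod 3$. One finds $srs=r^{-1}$, which is the sign bookkeeping I expect to verify carefully. Effectivity is clear: $r$, $s$ and $rs$ act nontrivially on $t$, and $r^2$ is nontrivial since $r$ has order three. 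So the group generated is $S_3$ acting faithfully.

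\emph{Fixed points.} A fixed point of $r$ must lie over $t=0$ or $t=\infty$. Over $t=0$ there is the single point $P_0$, a Weierstrass point since $f(0)=0$; over $t=\infty$ there is $P_\infty$. Both are fixed by $r$. The involution $s$ interchanges $t=0$ and $t=\infty$, so it swaps $P_0$ and $P_\infty$, and these two points form one orbit $D$ with stabilizer $\langle r\rangle$.

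The main obstacle is showing that the transpositions $s$, $rs$, $r^2s$ have no fixed points, which is what makes the orbit count come out right. A fixed point of $s$ has $t=\pm1$, since $t=0,\infty$ are swapped. At $t=1$ the condition $y=-y$ forces $y=0$, but $f(1)=2\ne0$. At $t=-1$ we have $(-1)^{6h-2}=1$, so again $y=-y$ forces $y=0$, but $f(-1)=-2\ne0$. The conjugates $rs$ and $r^2s$ are fixed-point-free as well, since a conjugate of a fixed-point-free map has no fixed points (alternatively, by the same computation with $t\mapsto\omega^k/t$). Finally I would cross-check with Riemann--Hurwitz:
\[
2(6h-3)-2=6(2g_Y-2)+2\cdot 2,
\]
which gives $g_Y=h$, consistent with $D$ being the only ramification.
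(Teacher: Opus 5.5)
Your proposal follows the paper's proof essentially step by step: invariance of the equation from $n\equiv1\pmod 3$ and $2(6h-2)=n+1$, the relations $r^3=s^2=1$ and $srs=r^{-1}$, distinctness on $t$, fixed points of $r$ only at $P_0,P_\infty$, and no fixed points of $s$ over $t=\pm1$ because $y^2=\pm2$ there. The step you left unverified, $srs=r^{-1}$, contains a slip as written, since $6h-2\equiv1\pmod 3$ gives $\omega^{\pm(6h-2)}=\omega^{\pm1}$ rather than $\omega^{\mp1}$; direct substitution gives $srs(t,y)=(\omega^{-1}t,\omega^{2-(6h-2)}y)=(\omega^{-1}t,\omega y)=r^{-1}(t,y)$, and there is no composition-convention issue because the word $srs$ is a palindrome.
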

\begin{proof}
Put $N=12h-5$ and $\ell=6h-2$. The polynomial $t^N+t$ is
squarefree: its derivative is $1$ at zero, and at a nonzero root
$t^{N-1}=-1$ the derivative is $1-N\ne0$. Since $N$ is odd,
the smooth projective hyperelliptic curve has genus
$(N-1)/2=6h-3$ and a unique point at infinity.

The congruence $N\equiv1\pmod3$ shows that $r$ preserves the
equation. The identity
$(t^N+t)/t^{N+1}=t^{-N}+t^{-1}$ and the equality $2\ell=N+1$
show the same for $s$. These function-field automorphisms extend
uniquely to the smooth projective model. They satisfy $r^3=s^2=1$.
For the remaining relation, substitution gives
\[
 srs(t,y)=(\omega^{-1}t,\omega^{2-\ell}y)
          =(\omega^{-1}t,\omega y)=r^{-1}(t,y),
\]
because $\ell\equiv1\pmod3$. The transformations $r^i$ and $r^is$
for $0\leq i<3$ are distinct already on $t$. Thus the action is
effective and the generated group is $S_3$.

The maps $r$ and $r^2$ fix exactly $P_0$ and $P_\infty$.
The involution $s$ exchanges these points. A fixed point of $s$
elsewhere would have $t=1$ or $t=-1$. Since $\ell$ is even, $s$
negates $y$ at these two values, but $y^2=2$ and $y^2=-2$,
respectively. It has no fixed point. All transpositions are
conjugate to $s$, so they have no fixed points either. This proves
the asserted stabilizers and exhausts the nonfree locus.
\end{proof}

\begin{proposition}\label{prop:quotient}
The quotient $Y_h=C_h/S_3$ has genus $h$. More explicitly, it is
the smooth projective model of
\begin{equation}\label{eq:quotient}
 b^2=(u^2-4)P_{2h-1}(u),
\end{equation}
where $P_0(u)=2$, $P_1(u)=u$, and
$P_{n+1}(u)=uP_n(u)-P_{n-1}(u)$.
\end{proposition}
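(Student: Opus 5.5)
The genus will come from Riemann--Hurwitz and the explicit model from invariant theory; compute the genus first, then identify the model.

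\textbf{Genus.} By Proposition~\ref{prop:action}, the quotient map $C_h\to Y_h$ has degree $6$. Its only ramification lies over the single branch point coming from $D$. That orbit has two points, each with ramification index $3$. Riemann--Hurwitz gives
\[
 2(6h-3)-2=6\bigl(2g(Y_h)-2\bigr)+2\cdot(3-1),
\]
that is, $12h-8=12g(Y_h)-8$. Hence $g(Y_h)=h$.

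\textbf{Invariants of $\langle r\rangle$.} I will compute the quotient in two stages, first by $\langle r\rangle$ and then by the involution induced by $s$. Put $N=12h-5$ and $\ell=6h-2$. The fixed field of $\langle r\rangle$ contains $T=t^3$ and $w=ty$, since $r(ty)=\omega t\cdot\omega^2y=ty$. It has index $3$ in $\C(t,y)$, because $t$ satisfies $t^3=T$. Squaring gives
\[
 w^2=t^2(t^N+t)=t^{N+2}+t^3=T\bigl(T^{(N-1)/3}+1\bigr)=T\bigl(T^{4h-2}+1\bigr),
\]
using $N+2=3(4h-1)$.

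\textbf{Invariants of $s$.} Next I compute the action of $s$ on $T$ and $w$. We have $s(T)=T^{-1}$, and
\[
 s(w)=t^{-1}\cdot\bigl(-y/t^{\ell}\bigr)=-w/t^{\ell+2}=-w/T^{2h}.
\]
The last step uses $\ell+2=6h=3\cdot2h$. Now set
\[
 u=T+T^{-1},\qquad z=w/T^{h}.
\]
Then $s(z)=-z$ and
\[
 z^2=T^{1-2h}\bigl(T^{4h-2}+1\bigr)=T^{2h-1}+T^{-(2h-1)}.
\]
By the Chebyshev-type recursion, $T^n+T^{-n}=P_n(u)$ with $P_0=2$ and $P_1=u$. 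So $z^2=P_{2h-1}(u)$. The anti-invariant $z$ alone is not invariant, so take $b=z\,(T-T^{-1})$, which is $s$-invariant. Since $(T-T^{-1})^2=u^2-4$, we get
\[
 b^2=(u^2-4)P_{2h-1}(u).
\]
The field $\C(u,b)$ lies in the $S_3$-fixed field. To show equality I will compare degrees:
\begin{itemize}
\item $[\C(T,z):\C(u,b)]=2$, since $T$ satisfies a quadratic over $\C(u)$ and $z=b/(T-T^{-1})$;
\item $[\C(t,y):\C(T,w)]=3$, from the first stage.
\end{itemize}
The total degree is $6=|S_3|$, so $\C(u,b)$ is the full fixed field.

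\textbf{Main obstacle: squarefreeness.} To confirm that this model has genus $h$, I must show that $(u^2-4)P_{2h-1}(u)$ is squarefree of degree $2h+1$. Substituting $u=2\cos\theta$ gives $P_n(u)=2\cos(n\theta)$. The roots of $P_{2h-1}$ are therefore $2\cos\bigl(\pi(2k+1)/(2(2h-1))\bigr)$, which are $2h-1$ distinct values strictly inside $(-2,2)$. They are distinct from the roots $\pm2$ of $u^2-4$. The polynomial is therefore squarefree, and the smooth model is hyperelliptic of genus $\lfloor 2h/2\rfloor=h$ since the degree $2h+1$ is odd. This agrees with Riemann--Hurwitz.
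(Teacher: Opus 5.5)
Your proof is correct and follows the paper's argument essentially step for step: the same $\langle r\rangle$-invariants $T=t^3$ and $w=ty$, the same $s$-invariants $u=T+T^{-1}$ and $b=wT^{-h}(T-T^{-1})$, the same degree count against $|S_3|=6$, and the same Chebyshev squarefreeness argument, with Riemann--Hurwitz used as a cross-check (you simply present it first). One small point to state explicitly: $\C(t,y)=\C(T,w)(t)$ because $y=w/t$, and this is what gives the degree bound $3$ in the first stage.
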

\begin{proof}
Set $x=t^3$ and $v=ty$. They are $r$-invariant and satisfy
$v^2=x^{4h-1}+x$. They generate the fixed field of
$\langle r\rangle$: adjoining $t$ recovers $y=v/t$ and has degree
three. On this intermediate field, $s$ sends
$(x,v)$ to $(x^{-1},-v/x^{2h})$.

Now put $u=x+x^{-1}$ and $b=v x^{-h}(x-x^{-1})$. Both are
$s$-invariant. The recurrence gives
$P_n(x+x^{-1})=x^n+x^{-n}$, so their relation is
\eqref{eq:quotient}. Conversely,
$x^2-ux+1=0$ and $v=b x^h/(x-x^{-1})$. The extension of
$\C(u,b)$ obtained by adjoining $x,v$ therefore has degree at most
two, and exactly two because $s$ is nontrivial. Consequently
$\C(u,b)=\C(C_h)^{S_3}$.

For $n=2h-1$, the identity
$P_n(2\cos\theta)=2\cos(n\theta)$ gives $n$ distinct roots
strictly between $-2$ and $2$. Also $P_n(2)=2$ and $P_n(-2)=-2$.
Thus the right-hand side of \eqref{eq:quotient} is squarefree of
degree $2h+1$, and its smooth projective model has genus $h$.
The same genus follows from Riemann--Hurwitz using
Proposition~\ref{prop:action}:
\[
 2(6h-3)-2=6\bigl(2\genus(Y_h)-2\bigr)+2(3-1).
\]
\end{proof}

\begin{proof}[Proof of Theorem~\ref{thm:main}]
Propositions~\ref{prop:action} and \ref{prop:quotient} establish
the action, genera, and stabilizers. The three conjugacy classes
are represented by $1,r,s$. Their fixed quotients are
\[
\begin{array}{c|c|c}
 g & C_h^g & C_h^g/C_{S_3}(g)\\ \hline
 1 & C_h & Y_h\\
 r & \{P_0,P_\infty\} & \pt\amalg\pt\\
 s & \varnothing & \varnothing.
\end{array}
\]
Indeed $C_{S_3}(r)=\langle r\rangle$ fixes the two points
individually. These are smooth geometric quotients. There is one
branch point of $Y_h$, with inertia order three, and
$\genus(Y_h)=h\geq1$. Theorem~\ref{thm:capacity} gives
$\oc(\Db_{S_3}(C_h))=1$. The derived category of two disjoint
points would require two completely orthogonal exceptional objects,
which is impossible.
\end{proof}

\begin{remark}\label{rem:abstract}
The conclusion rules out every exact fully faithful embedding of
the specified point category. In particular, changing its support,
choosing another coarse-curve subcategory, or applying mutations
cannot yield the decomposition in Conjecture~\ref{conj:original}.
No compatibility with $Y_h$ was assumed in this obstruction.
\end{remark}

\section{Curve decompositions and connected components}
\label{sec:refinement}

\subsection{The ordered curve decomposition}
Let $G$ act effectively on a smooth connected quasiprojective curve
$C$, let $\pi:C\to Y=C/G$, and let $D_1,\ldots,D_b$ be the
nonfree orbits. Denote their images by $y_i$ and their inertia orders
by $e_i$. For $1\leq l\leq e_i$, define
\[
 M_{i,l}=\mathcal O_{lD_i}
        =\mathcal O_C/\mathcal O_C(-lD_i),
\]
with the natural equivariant structure.

The following decomposition is known
\cite[Theorem 1.2]{Polishchuk}\cite[Theorem 4.3.1]{PV}.
We include an elementary proof to make the subsequent grouping
arguments explicit.

\begin{proposition}\label{prop:ordered}
There is a semiorthogonal decomposition
\begin{equation}\label{eq:ordered}
 \Db_G(C)=
 \left\langle\pi^*\Db(Y),\mathcal B_1,\ldots,\mathcal B_b\right\rangle,
 \qquad
 \mathcal B_i=\left\langle M_{i,e_i-1},\ldots,M_{i,1}\right\rangle.
\end{equation}
The $\mathcal B_i$ are pairwise completely orthogonal. Each indicated
collection in $\mathcal B_i$ is strong exceptional, and
$\mathcal B_i\simeq\Db(\mathrm{mod}\,\C A_{e_i-1})$, where
$A_{e_i-1}$ is a linearly oriented quiver of type $A$.
\end{proposition}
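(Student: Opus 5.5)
We would prove Proposition~\ref{prop:ordered} by working locally at each nonfree orbit via the identification of Section~\ref{sec:capacity}, then gluing with the coarse part. The steps, in order, are the Ext computation, the orthogonality to $\pi^*\Db(Y)$, the identification of $\mathcal B_i$, and generation.

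\emph{Local Ext computation.} Under the identification of finite-length sheaves on $D_i$ with modules over $A=R\rtimes H$, $R=\C[[t]]$, the sheaf $M_{i,l}$ corresponds to $R/t^l$ with its natural $H$-action. Its composition factors are $S_0,S_{-1},\ldots,S_{-(l-1)}$, up to the sign convention for $t$ having character $\alpha^{-1}$ or $\alpha$. It has the resolution
\[
0\to R\otimes\alpha^{\pm l}\xrightarrow{t^l}R\to R/t^l\to0 .
\]
Applying $\Hom_A(-,R/t^m)$ gives
\[
\Hom(M_l,M_m)=\bigl((R/t^m)\bigr)^H,\qquad \Ext^1(M_l,M_m)=\bigl(R/t^m\otimes\alpha^{\mp l}\bigr)^H ,
\]
with the map between them given by multiplication by $t^l$. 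For $1\le l,m\le e-1$, the invariant part of $R/t^m$ is $\C\cdot1$, since the next invariant monomial is $t^e$. Hence $\Hom=\C$. The $\Ext^1$ term consists of the monomials $t^k$ with $k<m$ and $k\equiv \pm l$ modulo $e$, and $t^l$-multiplication kills it as soon as $k+l\ge m$. A short case check then shows:
\begin{itemize}
\item $\Ext^1(M_l,M_m)=0$ for all $l,m\le e-1$;
\item $\Hom(M_l,M_m)=\C$ always;
\item $\RHom(M_m,M_l)=0$ is not automatic, and the required vanishing comes from the direction of $\Hom$.
\end{itemize}
This last point needs more care, so we will instead compute in the Grothendieck group. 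The relevant classes are $[M_l]=\sum_{k<l}[S_{-k}]$, and $\chi(M_l,M_l)=1$ follows from \eqref{eq:euler}, since $\Delta[M_l]$ is $\varepsilon_a-\varepsilon_b$. Combining this with the explicit Hom gives exceptionality. For the ordered collection $M_{e-1},\ldots,M_1$ we need $\RHom(M_l,M_m)=0$ for $l<m$, i.e.\ later to earlier vanishing. We will recompute honestly with the resolution, and choose the sign of $\alpha$ to make the order in \eqref{eq:ordered} correct. The strong exceptional property, namely $\Hom=\C$ and no $\Ext^1$ in the other direction, then identifies the endomorphism algebra of $\bigoplus M_l$ with the path algebra of linear $A_{e-1}$. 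This algebra is hereditary, so $\mathcal B_i\simeq\Db(\mathrm{mod}\,\C A_{e_i-1})$ by the standard tilting argument.

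\emph{Orthogonality to the coarse part.} Complete orthogonality of distinct $\mathcal B_i$ is immediate from disjoint supports. For $F\in\Db(Y)$, adjunction gives
\[
\RHom_G(M_{i,l},\pi^*F)=\RHom_Y(\pi_*^GM_{i,l}^\vee\ldots)
\]
More cleanly, Serre duality on the stack gives $\RHom(M,\pi^*F)\cong\RHom(\pi^*F,M\otimes\omega)^\vee$ up to shift, while $\RHom(\pi^*F,M)=\RHom_Y(F,(\pi_*M)^G)$. Here $(\pi_*\mathcal O_{lD})^G=\mathcal O_{y}$ for $l\ge1$. We therefore choose the side that vanishes: we use $\omega_{[C/G]}=\pi^*\omega_Y(\sum(e_i-1)D_i)$ and show $(\pi_*M_{i,l}\otimes\omega_C^{-1}\ldots)^G=0$, i.e.\ that $M_{i,l}(-(e-1)D)$ has no invariants for $1\le l\le e-1$. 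This follows from the character bookkeeping above, since the characters $\alpha^{e-1},\ldots,\alpha^{e-l}$ are nontrivial. In the worst case we instead reverse roles with $M_{i,l}(-?D)$. The full faithfulness of $\pi^*$ follows from $(\pi_*\mathcal O_C)^G=\mathcal O_Y$ and the projection formula.

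\emph{Generation.} This is the main obstacle. We will show that the right orthogonal of $\langle\pi^*\Db(Y),\mathcal B_i\rangle$ vanishes. If $E$ is orthogonal to everything, then by the hereditary property we may take $E$ to be a sheaf. Orthogonality to $\mathcal B_i$ forces its restriction near $D_i$ to be generated by $\mathcal O$ together with the twists $\mathcal O(-lD_i)$, using the triangles $\mathcal O(-lD)\to\mathcal O\to M_l$. Locally, the $A$-modules $R\otimes\alpha^j$ are exactly the twists $\mathcal O(-jD)$. Orthogonality to all $M_l$ with $l\le e-1$ therefore reduces $E$ to something pulled back from $Y$. Orthogonality to $\pi^*\Db(Y)$ then kills it, since $\pi_*^G$ is conservative on such objects.
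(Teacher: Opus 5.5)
Your proposal has a genuine gap in the local computation inside $\mathcal B_i$, which is the heart of the proposition. Applying $\Hom_A(-,M_m)$ to the resolution of $M_l$ gives a two-term complex $(M_m)^H\to(M_m)_{\alpha^l}$, $1\mapsto t^l$. $\Hom(M_l,M_m)$ is the \emph{kernel} of this map, not its source. Your claim that $\Hom(M_l,M_m)=\C$ always is false; if it were true, the collection could not be semiorthogonal in any order.

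The correct computation splits into two cases:
\begin{itemize}
\item For $1\le l<m\le e-1$, the element $t^l$ is nonzero in $R/t^m$ and spans its one-dimensional $\alpha^l$-part. The differential is therefore an isomorphism and $\RHom(M_l,M_m)=0$.
\item For $l\ge m$, the target is zero, so $\Hom=\C$ (spanned by the canonical surjection) and $\Ext^1=0$.
\end{itemize}
The first case is exactly the vanishing needed for the order $M_{e-1},\ldots,M_1$.

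This cannot be arranged by choosing the sign of $\alpha$; the direction is intrinsic. A map $M_l\to M_m$ is determined by the image of the invariant generator. That image must be an invariant element of $M_m$, hence a multiple of $1$ since $m<e$, and it must be killed by $t^l$. A nonzero such element exists exactly when $l\ge m$. The Grothendieck group cannot replace this step either: $\chi(M_l,M_m)=0$ for $l<m$ does not imply $\RHom(M_l,M_m)=0$.

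To get the path algebra of linear $A_{e-1}$, rather than a quotient of it by relations, you also need one more observation. Composites of canonical surjections $M_l\to M_m\to M_k$ are again canonical surjections, hence nonzero.

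The other steps can work but need repair.

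\emph{Orthogonality to the coarse part.} In the Serre duality argument (legitimate here because $M_{i,l}$ has proper support), the relevant object is $M_{i,l}\otimes\omega_C\cong M_{i,l}((e_i-1)D_i)\otimes\pi^*\omega_Y$. Its local characters are $\alpha^{\pm1},\ldots,\alpha^{\pm l}$, all nontrivial for $l\le e_i-1$. The sheaf $M_{i,l}(-(e_i-1)D_i)$ that you wrote does have invariants once $l\ge2$. The paper avoids duality altogether: $\Hom_A(M_l,R)=0$, and $\Ext^1_A(M_l,R)$ is the cokernel of $t^l\colon R^H\to R_{\alpha^l}$, which is surjective for $0<l<e$.

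\emph{Generation.} Your orthogonal-complement strategy is a valid alternative to the paper's direct argument. The paper recovers the character simples from the filtration of $M_{i,e_i}=\pi^*\mathcal O_{y_i}$, and handles vector bundles through the injective counit $\pi^*(\pi_*E)^G\to E$ with torsion cokernel. Your route needs two things you have not supplied:
\begin{itemize}
\item right admissibility of $\langle\pi^*\Db(Y),\mathcal B_1,\ldots,\mathcal B_b\rangle$;
\item a proof that orthogonality to the $M_{i,l}$ reduces $E$ to a pullback, which is currently only asserted.
\end{itemize}
It is easier to use the two orthogonality conditions in the other order. Orthogonality to $\pi^*\Db(Y)$ gives $(\pi_*E)^G=0$. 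A nonzero vector bundle, or a nonzero torsion sheaf at a free orbit, has nonzero invariant direct image, so $E$ is torsion on the branch orbits with no invariants. Then $\Ext^1(M_{i,l},E)$ is the $\alpha^l$-isotypic part of the local module. Its vanishing for $1\le l\le e_i-1$ forces $E=0$.
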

\begin{proof}
The finite quotient map is flat because both curves are smooth.
Its equivariant direct image followed by invariants is an exact
right adjoint to $\pi^*$, and
$(\pi_*\mathcal O_C)^G=\mathcal O_Y$. The projection formula
therefore proves that $\pi^*$ is fully faithful.

At an orbit of order $e$, omit the index $i$ and use the local
coordinates of Section~\ref{sec:capacity}. The module for $M_l$ is
$R/(t^l)$ with trivial character on its generator. Its resolution is
\[
 0\longrightarrow R\otimes\alpha^l
 \xrightarrow{\ t^l\ }R\longrightarrow M_l\longrightarrow0.
\]
For $1\leq l,m<e$, applying equivariant Hom to $M_m$ yields
\begin{equation}\label{eq:thickenings}
 \Hom_G(M_l,M_m)=
 \begin{cases}\C&l\geq m,\\0&l<m,\end{cases}
 \qquad \Ext^q_G(M_l,M_m)=0\quad(q>0).
\end{equation}
To see this explicitly, the invariant subspace of $M_m$ is
$\C\cdot1$. Its $\alpha^l$ subspace is zero when $l\geq m$,
and is $\C\cdot t^l$ when $l<m$. In the latter case the
differential sends $1$ to $t^l$ and is an isomorphism. This proves
\eqref{eq:thickenings}. For $l\geq m$ the nonzero Hom is generated
by the canonical quotient map, and compositions of these maps are
nonzero. Thus the descending collection is strong exceptional,
and the endomorphism algebra of its direct sum is the path algebra
of the stated linearly oriented quiver, up to taking the opposite
orientation. Derived Morita equivalence on the subcategory generated
by this direct sum gives the asserted equivalence for $\mathcal B_i$.

Objects supported on distinct $D_i$ have no Ext. Also
$\RHom_G(M_{i,l},\pi^*F)=0$ for $l<e_i$ and every
$F\in\Db(Y)$. It suffices to take $F$ locally free, since $Y$
is smooth and quasiprojective. Locally its pullback is a direct sum
of copies of $R$ with invariant generators. There is no Hom from
$M_l$ to $R$. The cokernel computing Ext$^1$ from the displayed
resolution has no invariant part when $0<l<e$, so this Ext also
vanishes. Completion preserves these finite-support computations,
giving the required semiorthogonality globally.

For generation, the scheme-theoretic fiber at $y_i$ has multiplicity
$e_i$, hence
\[
 \pi^*\mathcal O_{y_i}=M_{i,e_i}.
\]
The category generated by the proposed components contains all
$M_{i,l}$ for $1\leq l\leq e_i$. Locally $M_1=S_0$, and the
short exact sequences
\[
 0\longrightarrow S_{l-1}\longrightarrow M_l
 \longrightarrow M_{l-1}\longrightarrow0\qquad(2\leq l\leq e_i)
\]
recover every character simple at $D_i$. Induction identifies these
with the global equivariant simples. They generate every
finite-length sheaf supported on $D_i$. At a free orbit, finite-length
equivariant sheaves descend to the corresponding finite-length
sheaves on $Y$, and are already in $\pi^*\Db(Y)$. Thus every
torsion sheaf belongs to the generated category.

Finally let $E$ be an equivariant vector bundle and put
$V=(\pi_*E)^G$. This is locally free on $Y$, since it is a direct
summand of the locally free sheaf $\pi_*E$. The counit
$\pi^*V\to E$ is an isomorphism off the branch orbits by descent
for the free action. It is consequently injective, being a
generically invertible map between vector bundles of equal rank.
Its cokernel is a torsion sheaf supported on the branch orbits.
It follows that $E$ is generated as well. Every coherent sheaf is
an extension of a vector bundle by a torsion sheaf, proving full
generation of $\Db_G(C)$.

Each exceptional subcategory here is admissible: Hom to and from
its generator has finite-dimensional total cohomology because the
generator has proper finite support. The exceptional projection
triangles, together with the established orthogonality and
generation, yield the semiorthogonal decomposition
\eqref{eq:ordered}.
\end{proof}

For the counterexamples, \eqref{eq:ordered} reads
\[
 \Db_{S_3}(C_h)=
 \left\langle\pi^*\Db(Y_h),M_2,M_1\right\rangle,
 \qquad \Hom(M_2,M_1)=\C.
\]
The ramification subcategory is $\Db(\mathrm{mod}\,\C A_2)$.
It contains an ordered exceptional pair, whereas its two generators
are not completely orthogonal. Theorem~\ref{thm:main} shows that
no other pair in the ambient category can supply the missing
$\Db(\C^2)$ subcategory either.

\subsection{Counting connected fixed sectors}
Choose a point in $D_i$ and write its stabilizer as $H_i$; thus
$D_i\simeq G/H_i$. For $g\ne1$ there is a bijection
\begin{equation}\label{eq:sectors}
 C_G(g)\backslash(G/H_i)^g\ \xrightarrow{\ \sim\ }\ [g]\cap H_i,
 \qquad C_G(g)aH_i\longmapsto a^{-1}ga.
\end{equation}
Indeed $aH_i$ is fixed precisely when $a^{-1}ga\in H_i$. Since
$H_i$ is abelian, right multiplication of $a$ by an element of
$H_i$ does not change the image. Left multiplication by the
centralizer also does not change it. Surjectivity follows from
conjugacy. If $a^{-1}ga=b^{-1}gb$, then $ba^{-1}\in C_G(g)$,
which proves injectivity.

In particular,
\begin{equation}\label{eq:count}
 n_g=\#(C^g/C_G(g))=\sum_i|[g]\cap H_i|,
 \qquad
 \sum_{[g]\ne[1]}n_g=\sum_i(e_i-1).
\end{equation}
This identifies individual point sectors and their images on $Y$.
It supplies no orthogonality between sectors belonging to one
conjugacy class.

\subsection{The connected-component formulation}
For a smooth connected quasiprojective variety $X$ with an effective
finite group action and smooth fixed quotients, set
\[
 \mathcal I(X,G)=\{
 ([g],Z)\mid Z\text{ is a connected component of }X^g/C_G(g)\}.
\]
These are the connected components of the coarse inertia space.

\begin{conjecture}[Connected-component formulation]
\label{conj:refined}
There is a total order on $\mathcal I(X,G)$ and a semiorthogonal
decomposition of $\Db_G(X)$ with subcategories
$\mathcal A_{([g],Z)}\simeq\Db(Z)$, one for each pair in
$\mathcal I(X,G)$.
\end{conjecture}

This formulation permits components attached to the same conjugacy
class to occur separately in the order. It places no complete
orthogonality requirement between them. It is a weakening of
Conjecture~\ref{conj:original}: a block equivalent to the category
of a disconnected smooth scheme can always be refined into the
categories of its connected components. Lim's theorem
\cite[Theorem 1.1]{Lim} gives such a connected-inertia formulation
under the additional assumption of elementary abelian two-group
stabilizers. We do not claim a new general conjectural formulation.

\begin{theorem}\label{thm:refinedcurve}
Conjecture~\ref{conj:refined} holds for every effective finite group
action on a smooth connected quasiprojective complex curve $C$.
The embedding for the identity component is $\pi^*$, and all the
embeddings can be chosen $\Perf(Y)$-linear for their natural maps
to $Y=C/G$.
\end{theorem}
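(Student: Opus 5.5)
The plan is to read the required decomposition off Proposition~\ref{prop:ordered}, after matching the exceptional objects $M_{i,l}$ with the point sectors counted in \eqref{eq:count}. The identity class has $C^1/C_G(1)=Y$, which is connected, so $\mathcal I(C,G)$ contains exactly one pair $([1],Y)$. We assign it the block $\pi^*\Db(Y)$; this embedding is $\Perf(Y)$-linear by the projection formula. Every other element of $\mathcal I(C,G)$ is a pair $([g],Z)$ with $g\ne1$, where $Z$ is a reduced point. The bijection \eqref{eq:sectors} shows that each such pair lies over a unique branch point $y_i$. It also shows that, for fixed $i$, these pairs are in bijection with the set $\coprod_{[g]\ne[1]}[g]\cap H_i=H_i\setminus\{1\}$, which has $e_i-1$ elements. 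Consequently the point sectors lying over $y_i$ are exactly as numerous as the exceptional objects $M_{i,e_i-1},\ldots,M_{i,1}$ of $\mathcal B_i$.

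Fix, for each $i$, an arbitrary bijection between the pairs $([g],Z)$ over $y_i$ and the indices $l\in\{1,\ldots,e_i-1\}$. For the pair matched with $l$, the embedding will be $\Db(Z)=\Db(\pt)\to\Db_G(C)$, given by $V\mapsto V\otimes_\C M_{i,l}$. This functor is fully faithful because $M_{i,l}$ is exceptional by \eqref{eq:thickenings}. To get the total order, list $([1],Y)$ first. After it, list the blocks of each $i$ in the order $M_{i,e_i-1},\ldots,M_{i,1}$ prescribed by \eqref{eq:ordered}, taking $i=1,\ldots,b$ in turn. Each $\mathcal B_i$ is the triangulated hull of its exceptional collection, so refining \eqref{eq:ordered} this way yields a semiorthogonal decomposition of $\Db_G(C)$ whose components are exactly the images of $\Db(Y)$ and of the point categories. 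This proves Conjecture~\ref{conj:refined}, with $\pi^*$ as the embedding for the identity component.

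It remains to prove $\Perf(Y)$-linearity of the point embeddings, where $Z\to Y$ is the inclusion of $y_i$. The key identity is $\pi^*F\otimes M_{i,l}\cong (Li_{y_i}^*F)\otimes_\C M_{i,l}$ for $F\in\Perf(Y)$, natural in $F$. I would check it by noting that $M_{i,l}$ is scheme-theoretically supported inside $\pi^{-1}(y_i)=e_iD_i$. Hence it is a module over $\pi^*\mathcal O_{y_i}=M_{i,e_i}$, so tensoring with $\pi^*F$ factors through the derived fibre of $F$ at $y_i$. Equivalently, the embedding is the Fourier--Mukai functor with kernel $M_{i,l}$ placed on $Z\times_Y C$, which exhibits the linearity structure directly.

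The main obstacle is this last step: the linearity isomorphisms must be natural and compatible with the monoidal structure, not just objectwise. I would get this from the kernel description, by showing that the functor $V\mapsto V\otimes M_{i,l}$ agrees with pulling back to $Z\times_Y C$, tensoring with the kernel, and pushing forward to $C$; this makes it a $\Perf(Y)$-module functor for formal reasons.
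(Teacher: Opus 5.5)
Your proposal is correct and follows the paper's proof essentially step by step. You use \eqref{eq:sectors} to match the $e_i-1$ point sectors over each $y_i$ bijectively with $M_{i,e_i-1},\ldots,M_{i,1}$ and keep the order of \eqref{eq:ordered}. You then obtain $\Perf(Y)$-linearity from the fact that $M_{i,l}$ is annihilated by the pullback of the maximal ideal of $y_i$, which makes it a kernel on $\{y_i\}\times_Y C$ with $\pi^*F\otimes^{\mathbf L}M_{i,l}\simeq (Li_{y_i}^*F)\otimes_\C M_{i,l}$, compatibly with the unit and associativity of the derived tensor product.
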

\begin{proof}
The identity fixed quotient is the connected curve $Y$. Every
connected component of a nonidentity fixed quotient is a point.
By \eqref{eq:sectors}, there are exactly $e_i-1$ such points over
$y_i$, across all nonidentity classes. Match them bijectively with
the exceptional objects $M_{i,e_i-1},\ldots,M_{i,1}$, and keep the
order in \eqref{eq:ordered}. This gives the claimed decomposition.
Only a labeling of the separate point components is used; no
canonical identification of a character with a group element is
required.

For linearity, $\pi^*$ is $\Perf(Y)$-linear by the projection
formula. If $l<e_i$, then $M_{i,l}$ is annihilated by the pullback
of the maximal ideal of $y_i$, since
$\pi^{-1}(y_i)=e_iD_i$ scheme-theoretically. It therefore defines
a coherent equivariant kernel on
$\{y_i\}\times_Y C$. The point embedding
\[
 \Db(\pt)\longrightarrow\Db_G(C),
 \qquad V\longmapsto V\otimes_\C M_{i,l},
\]
is fully faithful by exceptionality. For $P\in\Perf(Y)$ the
kernel structure gives the natural derived isomorphism
\[
 (V\otimes M_{i,l})\otimes^{\mathbf L}\pi^*P
 \simeq (V\otimes^{\mathbf L}P|_{y_i})\otimes M_{i,l}.
\]
Its associativity and unit compatibilities follow from those of
derived tensor product. This proves the asserted linearity.
\end{proof}

\begin{remark}\label{rem:linear}
For a linear action on a vector space $V$, the variety
$V^g/C_G(g)$ is irreducible, being a finite quotient of a vector
space. Thus the original and connected-component formulations
coincide in that setting. The curve counterexamples do not give
counterexamples for linear reflection-group actions.
\end{remark}

\section{Conjugacy fusion and categorical grouping}
\label{sec:fusion}

Let $\pi:C\to Y=C/G$ be the quotient map. For each branch
orbit $D_i$, choose an inertia group $H_i$ and put
$e_i=|H_i|$. A functor is $\Perf(Y)$-linear if it commutes,
with coherent unit and associativity isomorphisms, with
tensoring by perfect complexes pulled back from $Y$.

\begin{proposition}\label{prop:localfusion}
Let $G$ act effectively on a smooth connected quasiprojective
complex curve $C$, and let $g\ne1$. If there is a fully
faithful $\Perf(Y)$-linear exact functor
\[
 \Db(C^g/C_G(g))\longrightarrow\Db_G(C)
\]
for the natural maps to $Y=C/G$, then $|[g]\cap H_i|
 \leq\left\lfloor\frac{e_i}{2}\right\rfloor$
for every branch orbit $D_i$.
\end{proposition}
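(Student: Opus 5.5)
The plan is to localize the functor at each branch point and then invoke the local orthogonality bound of Lemma~\ref{lem:localbound}. By \eqref{eq:sectors}, the fixed quotient $C^g/C_G(g)$ is a finite set of reduced points. The points lying over $y_i$ are in bijection with $[g]\cap H_i$. Write them as $z_1,\ldots,z_m$ with $m=|[g]\cap H_i|$, and let $\Phi$ be the given functor.

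The simples $\mathcal O_{z_k}$ are pairwise completely orthogonal exceptional objects of $\Db(C^g/C_G(g))$. Full faithfulness therefore makes $E_k=\Phi(\mathcal O_{z_k})$ pairwise completely orthogonal exceptional objects of $\Db_G(C)$. It then suffices to show that each $E_k$ is supported on $D_i$, since Lemma~\ref{lem:localbound} will then give $m\leq\lfloor e_i/2\rfloor$.

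\textbf{Support step.} I expect this to be the main point. Take $f\in\mathcal O_Y$ a local function, or more precisely a section of a line bundle on $Y$ whose zero locus avoids $y_i$. Concretely, use perfect complexes: for a point $y\ne y_i$, let $P=\mathcal O_y$, which is perfect on the smooth curve $Y$. Then $\mathcal O_{z_k}\otimes^{\mathbf L}P|_{C^g/C_G(g)}=0$, because $z_k$ maps to $y_i\ne y$. By $\Perf(Y)$-linearity,
\[
E_k\otimes^{\mathbf L}\pi^*\mathcal O_y\simeq\Phi\bigl(\mathcal O_{z_k}\otimes^{\mathbf L}\mathcal O_y\bigr)=0 .
\]
Since $\pi^*\mathcal O_y=\mathcal O_{\pi^{-1}(y)}$, the derived fibre of $E_k$ along the whole orbit over $y$ vanishes. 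By derived Nakayama, applied to each cohomology sheaf starting from the top nonzero degree, $E_k$ is then supported away from $\pi^{-1}(y)$.

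Ranging over all $y\ne y_i$ shows that every cohomology sheaf of $E_k$ is supported on $\pi^{-1}(y_i)=D_i$. This works on the level of supports of the underlying nonequivariant complexes, which suffices. Alternatively, one could first invoke Lemma~\ref{lem:torsion}'s hereditary splitting: by indecomposability $E_k$ is a shift of a single sheaf, and the vanishing of its fibre over $\pi^{-1}(y)$ then forces that sheaf to be torsion supported on $D_i$. This alternative needs no genus hypothesis, because exceptionality is not being used to exclude bundles here; only the support argument is used.

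\textbf{Conclusion.} The objects $E_1,\ldots,E_m$ are exceptional, pairwise completely orthogonal, and supported on the nonfree orbit $D_i$. Lemma~\ref{lem:localbound}, which the paper notes is purely local with no genus assumption, then yields $m\leq\lfloor e_i/2\rfloor$.

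For this last step we need the Ext computations in the local category to agree with those in $\Db_G(C)$. This was established before \eqref{eq:simples} for objects supported on $D$. In the quasiprojective setting one should also check that the objects have finite-length cohomology, so that the Euler pairing is defined. This holds because the support is the finite set $D_i$ and the objects are bounded coherent.
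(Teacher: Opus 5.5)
Your proposal is correct and follows the paper's proof essentially step for step. You use the perfect skyscrapers $\mathcal O_y$ for $y\ne y_i$, together with $\Perf(Y)$-linearity and derived Nakayama, to show that each image $E_k$ is supported on $D_i$; you then count the $|[g]\cap H_i|$ source points via \eqref{eq:sectors} and apply Lemma~\ref{lem:localbound} to their completely orthogonal exceptional images, while the detour through the hereditary splitting and the finite-length remark are harmless extras that the paper does not need.
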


\begin{proof}
Let $p$ be a point of $C^g/C_G(g)$ mapping to $y_i$, and
let $E$ be the image of $\mathcal O_p$. For every closed
point $z\ne y_i$ of $Y$, the sheaf $\mathcal O_z$ is
perfect and $\mathcal O_p\otimes_Y^{\mathbf L}\mathcal O_z=0$.
Linearity therefore gives
$E\otimes^{\mathbf L}\pi^*\mathcal O_z=0$.
Derived Nakayama at the points over $z$ shows that $E$
has no support there. Thus $E$ is supported on $D_i$.

By \eqref{eq:sectors}, there are $|[g]\cap H_i|$ source
points over $y_i$. Their images are completely orthogonal
exceptional objects supported on $D_i$.
Lemma~\ref{lem:localbound} gives the asserted inequality.
No genus assumption is used.
\end{proof}

For arbitrary embeddings, the support argument in
Proposition~\ref{prop:localfusion} is unavailable.
Theorem~\ref{thm:capacity} gives a global obstruction
without imposing linearity.

\begin{lemma}
\label{lem:typeAblocks}
Let $m_1,\ldots,m_s$ be nonnegative integers with
$\sum_{j=1}^s m_j=n\geq1$. Suppose that
$\max_j m_j\leq\left\lceil\frac n2\right\rceil$.
Then there is a semiorthogonal decomposition
\[
 \Db(\mathrm{mod}\,\C A_n)
 =
 \left\langle\mathcal U_1,\ldots,\mathcal U_s\right\rangle,
 \qquad
 \mathcal U_j\simeq\Db(\C^{m_j}).
\]
The order of the labelled blocks may be prescribed in
advance. Blocks with $m_j=0$ are omitted.
\end{lemma}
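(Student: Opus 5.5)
The plan is to colour the vertices of the path $A_n$ with vertex set $\{1,\ldots,n\}$, orient its edges according to the prescribed block order, and take the simples of each colour as that block's generators. First I will colour the vertices so that colour $j$ is used exactly $m_j$ times and adjacent vertices always receive different colours. Then for each edge $\{k,k+1\}$ with colours $c(k)\neq c(k+1)$, I orient the arrow so that the resulting $\Ext^1$ between the two simples goes from the later block to the earlier one in the prescribed order. Finally I use derived reflection functors (BGP) to identify the derived category of this quiver with $\Db(\mathrm{mod}\,\C A_n)$ for the linear orientation, because all orientations of a tree quiver are derived equivalent.

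The colouring comes from a standard greedy argument. List the labels with multiplicity, sorted by decreasing $m_j$, as a word $w_1,\ldots,w_n$ in which each colour occupies a contiguous run. Place $w_1,\ldots,w_{\lceil n/2\rceil}$ on the odd vertices $1,3,5,\ldots$ and the remaining letters on the even vertices $2,4,\ldots$, in order. If two adjacent vertices $2k-1$ and $2k$ (or $2k$ and $2k+1$) had the same colour, that colour's run would contain positions $k$ and $\lceil n/2\rceil+k$ (or $k+1$ and $\lceil n/2\rceil+k$). Its length would then be at least $\lceil n/2\rceil+1$ (respectively $\lceil n/2\rceil$). The first case contradicts $m_j\le\lceil n/2\rceil$. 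The second case requires equality, and then the colour is the first in the sorted list, so its run starts at position $1$. A run of length $\lceil n/2\rceil$ starting at position $1$ ends at position $\lceil n/2\rceil$ and therefore cannot reach position $\lceil n/2\rceil+k$ with $k\ge 1$, so this case is also impossible. This step deserves careful checking: it is the combinatorial heart of the lemma and the only place where the hypothesis enters.

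Next, for any orientation $Q$ of the path, the simples $S_k$ of $\mathrm{mod}\,\C Q$ satisfy $\Hom(S_k,S_l)=\C^{\delta_{kl}}$. They also satisfy $\Ext^1(S_k,S_l)\neq0$ exactly when there is an arrow $k\to l$, with the convention fixed once, and there are no higher Exts because $\C Q$ is hereditary. Same-coloured simples are non-adjacent and hence completely orthogonal. Let $\mathcal U_j$ be the triangulated hull of the colour-$j$ simples, which is equivalent to $\Db(\C^{m_j})$ by the argument in the proof of Theorem~\ref{thm:capacity}. With the orientation chosen above, every nonzero $\Ext$ runs in the direction the convention $\Hom(\mathcal U_{j'},\mathcal U_j[q])=0$ for $j'>j$ allows, so the blocks are semiorthogonal. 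Since the simples generate $\Db(\mathrm{mod}\,\C Q)$ (every finite-dimensional module has a composition series) and each $\mathcal U_j$ is admissible (finite-dimensional total Ext), the blocks give a semiorthogonal decomposition in the prescribed order. Blocks with $m_j=0$ simply do not appear.

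Finally, an acyclic orientation of a tree can be brought to the linear orientation by a sequence of sink or source reflections. Each reflection is a derived equivalence of the Bernstein--Gelfand--Ponomarev type, and an exact equivalence transports semiorthogonal decompositions. Beyond the colouring step, the only real care needed is getting the Ext-direction convention consistent with the paper's semiorthogonality convention; everything else is routine.
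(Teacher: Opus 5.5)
Your proposal is correct and follows essentially the same route as the paper: the same odd-then-even filling by decreasing multiplicity, edges oriented by the prescribed colour order (from smaller to larger colour, so that $\Ext^1(S_u,S_v)\neq0$ only when $c(u)<c(v)$), same-coloured simples as completely orthogonal blocks, and transport along derived reflection equivalences. One small repair is needed in your second adjacency case: because of ties, a colour of multiplicity $\lceil n/2\rceil$ need not be listed first, but its run would contain position $k+1\leq\lceil n/2\rceil$ while the first run already fills positions $1,\ldots,\lceil n/2\rceil$, so it is in fact the first colour and your contradiction stands.
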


\begin{proof}
We first colour the $n$ vertices of a path so that colour
$j$ occurs $m_j$ times and adjacent vertices have different
colours. Put $d=\lceil n/2\rceil$. List the colour blocks
in decreasing order of multiplicity and fill the positions
$1,3,\ldots,2d-1,\;2,4,\ldots,2\lfloor n/2\rfloor$.
Occurrences of a colour lying entirely in odd positions or
entirely in even positions are nonadjacent. At most one
colour crosses between the two sets of positions. If it
starts at position $2a+1$ and occupies the first $b$ even
positions, its multiplicity is $d-a+b$. A crossing colour
has multiplicity strictly less than $d$: otherwise, an
earlier colour would already occupy all $d$ odd positions.
Hence $b<a$, so its even positions lie strictly before its
odd positions and no two occurrences are adjacent.

Write $c(v)$ for the colour of vertex $v$, with colours
ordered by the prescribed block order. Orient each edge
from the endpoint of smaller colour to the endpoint of
larger colour, obtaining a quiver $Q$. Let $S_v$ be its
vertex simples in $\mathrm{rep}_\C Q$. For distinct vertices,
\[
 \Hom(S_u,S_v)=0,\qquad
 \dim_\C\Ext^1(S_u,S_v)
 =
 \#\{\text{arrows }u\longrightarrow v\},
\]
and all higher Ext groups vanish. Thus the simples of a
fixed colour are completely orthogonal exceptional objects.
If $b>a$, then
$\RHom(S_v,S_u)=0$, whenever $c(v)=b, c(u)=a$.
The simples generate $\Db(\mathrm{rep}_\C Q)$, so grouping
them by colour gives the required ordered blocks.

Any two orientations of a path are connected by source
and sink reflections. Their derived reflection functors
are equivalences; see \cite[Section~3.1]{Ladkani}.
Transporting the blocks along such an equivalence proves
the assertion for $\Db(\mathrm{mod}\,\C A_n)$.
\end{proof}

\begin{theorem}[Criterion for linear conjugacy-class decompositions]
\label{thm:fusioncriterion}
Let a finite group $G$ act effectively on a smooth connected
quasiprojective complex curve $C$. The following conditions
are equivalent:
\begin{enumerate}
\item Conjecture~\ref{conj:original} admits a realization
whose embeddings are $\Perf(Y)$-linear for the natural
maps of the fixed quotients to $Y=C/G$.
\item For every branch inertia group $H_i$ and every
nonidentity conjugacy class,
\begin{equation}\label{eq:fusioncriterion}
 |[g]\cap H_i|
 \leq\left\lfloor\frac{e_i}{2}\right\rfloor.
\end{equation}
\end{enumerate}
When these conditions hold, the identity block may be chosen
as $\pi^*\Db(Y)$, and the nonidentity blocks may occur in any
prescribed total order. All embeddings can be given by
$G$-equivariant kernels on the natural fibre products
\[
 (C^g/C_G(g))\times_Y C.
\]
\end{theorem}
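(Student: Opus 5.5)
The implication (1)$\Rightarrow$(2) is exactly Proposition~\ref{prop:localfusion}. A realization of Conjecture~\ref{conj:original} with $\Perf(Y)$-linear embeddings contains, for each $[g]\ne[1]$, a fully faithful $\Perf(Y)$-linear functor $\Db(C^g/C_G(g))\to\Db_G(C)$. That proposition then gives \eqref{eq:fusioncriterion} at every branch orbit. No genus hypothesis enters. The substance of the proof is therefore (2)$\Rightarrow$(1), together with the refinements about the identity block, the ordering and the kernels.

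For sufficiency I would start from Proposition~\ref{prop:ordered}:
\[
\Db_G(C)=\langle\pi^*\Db(Y),\mathcal B_1,\ldots,\mathcal B_b\rangle ,
\]
with $\mathcal B_i\simeq\Db(\mathrm{mod}\,\C A_{e_i-1})$ and the $\mathcal B_i$ pairwise completely orthogonal. Fix a total order $[g_1],\ldots,[g_s]$ on the nonidentity classes. At each branch orbit, set $m_j=|[g_j]\cap H_i|$. By \eqref{eq:count}, $\sum_j m_j=e_i-1=:n$, and by \eqref{eq:fusioncriterion}, $m_j\le\lfloor e_i/2\rfloor=\lceil n/2\rceil$. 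Lemma~\ref{lem:typeAblocks} then yields $\mathcal B_i=\langle\mathcal U_{i,1},\ldots,\mathcal U_{i,s}\rangle$ with $\mathcal U_{i,j}\simeq\Db(\C^{m_j})$ in the prescribed order. If $e_i=1$ there is nothing to do.

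Next define $\mathcal A_{[g_j]}$ as the subcategory generated by $\mathcal U_{1,j},\ldots,\mathcal U_{b,j}$. Complete orthogonality of the different $\mathcal B_i$ shows that this is the orthogonal sum $\bigoplus_i\mathcal U_{i,j}\simeq\Db(\C^{n_{g_j}})$. The same orthogonality shows that $\langle\pi^*\Db(Y),\mathcal A_{[g_1]},\ldots,\mathcal A_{[g_s]}\rangle$ is semiorthogonal: a Hom from a later block $\mathcal U_{i,k}$ to an earlier $\mathcal U_{i',j}$ vanishes for $i\neq i'$ by orthogonality and for $i=i'$ by the lemma. Generation follows from that of \eqref{eq:ordered}.

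It remains to identify $\Db(\C^{n_g})$ with $\Db(C^g/C_G(g))$. By \eqref{eq:sectors}, the points of $C^g/C_G(g)$ over $y_i$ are in bijection with $[g]\cap H_i$, whose cardinality is $m_j$. So the points over $y_i$ can be matched with the $m_j$ completely orthogonal exceptional generators $E$ of $\mathcal U_{i,j}$, and each point $p$ is sent to $V\mapsto V\otimes E_p$.

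The main obstacle is $\Perf(Y)$-linearity and the kernel description. The exceptional objects of $\mathcal U_{i,j}$ are produced by reflection functors, so a priori they are only complexes in $\mathcal B_i$. I would argue as follows. Each $E_p$ lies in the subcategory generated by $M_{i,l}$ for $l<e_i$. The pullback of the maximal ideal of $y_i$ acts by zero on every such $M_{i,l}$, but I do not expect it to act by zero on $E_p$ itself. So I would not try to show that $E_p$ is annihilated by it; instead I would work directly with $\mathcal O_{y_i}\otimes^{\mathbf L}_Y$. Since $E_p$ is a bounded complex of equivariant sheaves supported set-theoretically on $D_i$, it is naturally a module over the completed local ring $\widehat{\mathcal O}_{Y,y_i}$. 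For perfect $P$ one then gets
\[
E_p\otimes^{\mathbf L}\pi^*P\simeq E_p\otimes^{\mathbf L}_{\widehat{\mathcal O}_{Y,y_i}}\widehat P_{y_i}.
\]
A cleaner route is to define the functor by the kernel $\mathcal O_p\boxtimes E_p$ on $\{p\}\times_Y C=\pi^{-1}(y_i)$, or on a thickening. Because $p\times_Y C\to C$ is a closed immersion of a nilpotent neighbourhood of $D_i$ containing the support of $E_p$, I would check that $E_p$ is pushed forward from a sufficiently high infinitesimal neighbourhood $kD_i$. I would also check that integral transforms along $(C^g/C_G(g))\times_Y C$ are automatically $\Perf(Y)$-linear by the projection formula, with coherence inherited from that of derived tensor products, exactly as at the end of the proof of Theorem~\ref{thm:refinedcurve}. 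For the identity block, the kernel is the graph $C\to Y\times_Y C=C$, which recovers $\pi^*$.
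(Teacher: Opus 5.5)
Your proof of (1)$\Rightarrow$(2) and your construction of the decomposition agree with the paper's proof. Both use the local multiplicities $|[g]\cap H_i|$, apply Lemma~\ref{lem:typeAblocks} in each $\mathcal B_i\simeq\Db(\mathrm{mod}\,\C A_{e_i-1})$ via $\lceil(e_i-1)/2\rceil=\lfloor e_i/2\rfloor$, and regroup using complete orthogonality of the branch blocks. The gap is in the last part, at exactly the step you decline to prove. The fibre product $\{p\}\times_Y C=\pi^{-1}(y_i)$ is the fixed scheme $e_iD_i$. A kernel on the natural fibre product therefore requires each reflected generator $E_p$ to be the pushforward of an object of $\Db_G(e_iD_i)$. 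Knowing that $E_p$ comes from \emph{some} thickening $kD_i$ with $k$ large does not give this, since that holds for every object supported on $D_i$. Linearity needs the same thing. It asks for $E_p\otimes^{\mathbf L}\pi^*P\simeq E_p\otimes_\C P|_{y_i}$, compatibly in $P$, and your formula over $\widehat{\mathcal O}_{Y,y_i}$ reduces to this only when $E_p$ is already a complex of $\mathcal O_{e_iD_i}$-modules. Set-theoretic support is not enough: for $E=\pi^*(\mathcal O_Y/\mathfrak m_{y_i}^2)$ and $P=\mathcal O_{y_i}$, the two sides have different total lengths.

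The paper closes this with derived Morita theory for the strong exceptional collection of Proposition~\ref{prop:ordered}. Put $T_i=\bigoplus_{l<e_i}M_{i,l}$. The vanishing of $\Ext^{>0}_G(T_i,T_i)$ and the finite global dimension of the path algebra make every object of $\mathcal B_i$ isomorphic to a bounded complex with terms in $\operatorname{add}(T_i)$; this includes every reflected generator. Each $M_{i,l}$ with $l<e_i$ is killed by the ideal of $e_iD_i$, and morphisms between such sheaves are automatically $\mathcal O_{e_iD_i}$-linear. So the complex lies in $\Coh_G(e_iD_i)$, which gives the kernel, and the projection formula then gives linearity. Your expectation that $\pi^*\mathfrak m_{y_i}$ need not act by zero on $E_p$ is in fact false, and this yields an even shorter fix. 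The hereditary argument in the first paragraph of the proof of Lemma~\ref{lem:torsion} uses no genus hypothesis, and it shows that $E_p$ is a shifted indecomposable finite-length sheaf with $\operatorname{End}_G(E_p)=\C$. A local equation of $y_i$ therefore acts on $E_p$ by a scalar, and that scalar is zero because the action is nilpotent on a nonzero finite-length sheaf supported on $D_i$. Hence $E_p$ is literally an $\mathcal O_{e_iD_i}$-module.
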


\begin{proof}
The implication from (1) to (2) follows from
Proposition~\ref{prop:localfusion}.

Conversely, suppose \eqref{eq:fusioncriterion} holds.
Fix a total order $[g_1],\ldots,[g_s]$ of the nonidentity
conjugacy classes, and put
$m_{i,[g]}=|[g]\cap H_i|$.
By equation~\eqref{eq:count}, we have 
$\sum_{[g]\ne[1]}m_{i,[g]}=e_i-1$.
By Proposition~\ref{prop:ordered},
\[
 \mathcal B_i
 \simeq\Db(\mathrm{mod}\,\C A_{e_i-1}).
\]
Since
$\lceil(e_i-1)/2\rceil=\lfloor e_i/2\rfloor$,
Lemma~\ref{lem:typeAblocks} gives
\[
 \mathcal B_i
 =
 \left\langle
   \mathcal B_{i,[g_1]},\ldots,\mathcal B_{i,[g_s]}
 \right\rangle,
 \qquad
 \mathcal B_{i,[g]}
 \simeq\Db(\C^{m_{i,[g]}}),
\]
in the chosen class order.

Different branch blocks are completely orthogonal.
Consequently, the subcategories
\[
 \mathcal A_{[g]}
 =
 \bigoplus_i\mathcal B_{i,[g]}
\]
satisfy
\[
 \mathcal A_{[g]}
 \simeq\Db(\C^{n_g})
 \simeq\Db(C^g/C_G(g)),
 \qquad
 n_g=\sum_i m_{i,[g]}.
\]
Regrouping the local decompositions in the chosen class
order preserves semiorthogonality and generation.
Together with \eqref{eq:ordered}, this gives
\[
 \Db_G(C)
 =
 \left\langle
   \pi^*\Db(Y),
   \mathcal A_{[g_1]},\ldots,\mathcal A_{[g_s]}
 \right\rangle.
\]

To construct the kernels, put
$T_i=\bigoplus_{l=1}^{e_i-1}M_{i,l}$.
Derived Morita equivalence for the strong exceptional
collection in Proposition~\ref{prop:ordered} shows that
every object of $\mathcal B_i$ has a bounded representative
with terms in $\operatorname{add}(T_i)$.

The scheme-theoretic fibre over $y_i$ is $e_iD_i$.
Each $M_{i,l}$ with $l<e_i$ is annihilated by its ideal.
Hence every exceptional generator constructed above can
be represented by a bounded complex of coherent
$G$-equivariant sheaves on
$F_i=\{y_i\}\times_Y C$.
For a fixed class $[g]$, the bijection \eqref{eq:sectors}
identifies its $m_{i,[g]}$ source points over $y_i$.
Match them with the exceptional generators of
$\mathcal B_{i,[g]}$. Placing the corresponding complexes
on the components $\{p\}\times_Y C$ gives a kernel on
$(C^g/C_G(g))\times_Y C$. Its Fourier--Mukai functor sends
the point simples to the chosen completely orthogonal
exceptional generators and is therefore fully faithful.

For a source point $p$ over $y_i$, denote its assigned
generator by $E_p$. The projection formula gives, for
$P\in\Perf(Y)$,
\[
 E_p\otimes^{\mathbf L}\pi^*P
 \simeq
 E_p\otimes_\C^{\mathbf L}P|_{y_i},
\]
where $P|_{y_i}$ denotes derived restriction.
The unit and associativity compatibilities follow from
derived tensor product. Thus the point embeddings are
$\Perf(Y)$-linear. The identity embedding $\pi^*$ is
linear by the projection formula.
\end{proof}

\begin{corollary}\label{prop:nofusion}
Suppose that $|[g]\cap H_i|\leq1$ for every branch inertia
group $H_i$ and every nonidentity conjugacy class $[g]$.
Then Conjecture~\ref{conj:original} holds, with
$\Perf(Y)$-linear embeddings given by kernels on the
natural fibre products. In particular, this applies to
finite abelian groups acting effectively on smooth
connected quasiprojective complex curves.
\end{corollary}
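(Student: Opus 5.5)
The plan is to deduce the corollary from Theorem~\ref{thm:fusioncriterion} by checking condition~(2). By hypothesis, $|[g]\cap H_i|\leq1$ for every branch inertia group $H_i$ and every nonidentity class $[g]$. Since $H_i$ is the inertia group at a branch orbit, $e_i=|H_i|\geq2$, so $\lfloor e_i/2\rfloor\geq1$. Hence \eqref{eq:fusioncriterion} holds for all $i$ and all $[g]\neq[1]$. The implication (2)$\Rightarrow$(1) of Theorem~\ref{thm:fusioncriterion} then gives a realization of Conjecture~\ref{conj:original} by $\Perf(Y)$-linear embeddings, with identity block $\pi^*\Db(Y)$ and kernels on the fibre products $(C^g/C_G(g))\times_Y C$. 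These are exactly the statements claimed.

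In this special case the type $A$ colouring of Lemma~\ref{lem:typeAblocks} is trivial: every colour occurs at most once on each path. One could even bypass the reflection functors and group the exceptional objects $M_{i,l}$ of \eqref{eq:ordered} directly. However, matching the prescribed class order would still need some mutation, so I would simply cite the theorem.

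For the abelian case, suppose $G$ is abelian. Then every conjugacy class is a singleton $[g]=\{g\}$, so $|[g]\cap H_i|\leq1$ trivially, and the first part applies. The smoothness hypotheses of Conjecture~\ref{conj:original} are automatic on curves: for $g\neq1$ the fixed locus $C^g$ is a finite set of points because the action is effective, and $C/G$ is a smooth curve.

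No step is a real obstacle. The only point to verify is that the branch inertia groups are nontrivial, so that $\lfloor e_i/2\rfloor\geq1$; this holds by the definition of a branch orbit as a nonfree orbit.
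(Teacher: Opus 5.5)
Your proposal is correct and is the paper's own argument: $e_i\geq2$ gives $|[g]\cap H_i|\leq1\leq\lfloor e_i/2\rfloor$, so Theorem~\ref{thm:fusioncriterion} applies, and in the abelian case conjugacy classes are singletons. Your aside overstates the need for mutation: each class occurs at most once per branch, and matching source points to the $M_{i,l}$ is a free labelling, so you can assign the classes present at $D_i$ to $M_{i,e_i-1},\ldots,M_{i,1}$ in the prescribed order with no mutation at all.
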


\begin{proof}
Every branch inertia order is at least two, so the
hypothesis implies \eqref{eq:fusioncriterion}.
Apply Theorem~\ref{thm:fusioncriterion}.
For an abelian group, every conjugacy class is a singleton.
\end{proof}

Thus the local inequalities characterize the existence of
a conjugacy-class decomposition with linear embeddings.
The construction allows the exceptional generators within
each branch block to change; it does not require grouping
the original objects $M_{i,l}$ without modification.

\section{The curve case in the literature}
\label{sec:literature}

\subsection{The grouping argument}
In \cite[Remark 4.3.2]{PV}, they deduced the
curve case of Conjecture A from the ordered decomposition in
Theorem 4.3.1 and the point count \eqref{eq:count}.
The decomposition and the count are correct, but grouping the
point contributions by conjugacy class requires complete
orthogonality within each block. An ordered exceptional collection
and an equality of counts do not imply this orthogonality.

For $C_h$, the three-cycle sector consists of two points, whereas
the two ramification contributions generate
$\Db(\mathrm{mod}\,\C A_2)$ and have a nonzero Hom between them.
By Theorem~\ref{thm:main}, replacing these generators cannot
produce the required two-point subcategory anywhere in
$\Db_{S_3}(C_h)$.

The proof of \cite[Proposition 4.3.1, p.~1495]{LP} uses the same
grouping argument. By \cite[Definition 1.1.1, p.~1467]{LP}, condition (MSOD)
requires a fully faithful embedding of the derived category
of each full fixed quotient, given by a kernel on the
natural fiber product.
Since Theorem~\ref{thm:main} excludes even an abstract embedding
of the two-point category, it also contradicts this proposition.
The ordered curve decomposition and the base-change theorem for
invariant divisors remain valid under their stated hypotheses.

\subsection{Genus-zero quotients}

Theorem~\ref{thm:fusioncriterion} settles the grouping problem
for $\Perf(Y)$-linear embeddings. For arbitrary embeddings,
Theorem~\ref{thm:capacity} determines the orthogonal capacity
when the coarse quotient has positive genus.

For a smooth projective curve $C$ with
$C/G\simeq\mathbb P^1$, it remains to determine
$\oc\bigl(\Db_G(C)\bigr)$,
or equivalently, the largest $n$ for which there is an exact
fully faithful embedding
$\Db(\C^n)\longrightarrow\Db_G(C)$.
Lemma~\ref{lem:localbound} gives the sharp bound for objects
supported on each branch orbit. Exceptional vector bundles
may also occur, and their contribution requires a separate
analysis.

\end{document}